\definecolor{darkgreen}{HTML}{006622}
\pgfplotsset{compat = 1.11}
\tikzset{>={Latex[width=1.7mm,length=2.2mm]}}
\tikzset{zigzag/.style={decorate, decoration=zigzag}}
\def\@hex@@Hex#1%
\def\E{{\mathbb E}}
\def\P{{\mathbb P}}
\def\I{{\mathbb{1}}}
\def\phim{{\varphi(x,t)}}
\def\phimx{{\varphi'_x(x,t)}}
\def\phimt{{\varphi'_t(x,t)}}
\newtheorem{thm} {\noindent \bf{Theorem}}[section]
\newtheorem{remark}{\noindent \bf{Remark}}[section]
\newtheorem{cor}{\noindent \bf{Corollary}}[section]
\newcommand{\eqn}[1]{\begin{equation}#1\end{equation}}
\newcommand{\eqan}[1]{\begin{align}#1\end{align}}
\newcommand{\asymptotics}[1]{}
\newcommand{\nn}{\nonumber}
\begin{document}

\title{Unified approach for solving
exit problems for
additive-increase and
multiplicative-decrease processes}
\author{Remco van der Hofstad\footnotemark[4]\and
        Stella Kapodistria\footnotemark[1]
        \and Zbigniew Palmowski\footnotemark[2]
        \and Seva Shneer\footnotemark[3]
        }

\date{\today}

\maketitle

\footnotetext[1]{Department of Mathematics and Computer Science, Eindhoven University of Technology,
P.O.\ Box 513, 5600 MB  Eindhoven, The Netherlands,
E-mail:   \texttt{s.kapodistria@tue.nl}}
\footnotetext[3]{Heriot-Watt University, Edinburgh, UK
E-mail:   \texttt{v.shneer@hw.ac.uk}}
\footnotetext[2]{Faculty of Pure and Applied Mathematics, Wroclaw University of Science and Technology, Wyb. Wyspia\'nskiego 27, 50-370 Wroclaw, Poland,
E-mail:   \texttt{zbigniew.palmowski@gmail.com}}
\footnotetext[4]{Department of Mathematics and Computer Science, Eindhoven University of Technology,
P.O.\ Box 513, 5600 MB  Eindhoven, The Netherlands,
E-mail:   \texttt{r.w.v.d.hofstad@tue.nl}}

\begin{abstract}
We analyse an additive-increase and multiplicative-decrease (aka growth-collapse) process that grows linearly in time and that  experiences downward jumps at Poisson epochs that are (deterministically) proportional to its present position.
This process is used for example in modelling of Transmission Control Protocol (TCP) and can be viewed as a particular example of the so-called shot noise model, a basic tool in modeling earthquakes, avalanches and neuron firings.

For this process, and also for its reflected versions, we consider one- and two-sided exit problems that concern the identification of the laws of exit times from fixed intervals and half-lines. All proofs are based on a unified first-step analysis approach at the first jump epoch, which allows us to give explicit, yet involved, formulas for their Laplace transforms.

All the eight Laplace transforms can be described in terms of
two so-called {\em scale functions}
$Z_{\uparrow}$ and $L_{\uparrow}$.
Here $Z_{\uparrow}$ is described in terms of multiple explicit sums, and $L_{\uparrow}$ in terms of an explicit recursion formula. All other Laplace transforms can be obtained from $Z_{\uparrow}$ and $L_{\uparrow}$ by taking limits, derivatives, integrals and combinations of these. 


\asymptotics{We also give the Cram\'er-type asymptotics for the one-sided downward probability when the initial position tends to infinity.}
\medskip

\noindent
{\bf Keywords}: Exit times; first passage times; additive-increase and multiplicative-decrease process; growth-collapse process; Laplace-Stieltjes  transform; first-step analysis; queueing process; storage; AIMD algorithm.
\end{abstract}

\section{Introduction}
\label{sec-intro}
We analyse an additive-increase and multiplicative-decrease (aka growth-collapse or stress-release) process $X\equiv\left ( X_t\right)_{t\geq 0}$ that grows linearly with slope $\beta$ and experiences downward jumps at Poisson epochs, say $(T_i)_{i\in\mathbb{N}}$ with fixed intensity $\lambda$. The collapses are modeled by multiplying the present process position by a fixed proportion $p\in (0,1)$, that is $-\Delta X_{T_i}=(1-p)X_{T_i-}$ for $\Delta X_t=X_t-X_{t-}$. We assume that the process starts on the positive half-line $X_0=x>0$. An illustration of a path of the process $X_t$ is depicted in Figure \ref{fig: Fluid_model}. For more information on this class of processes, the interested reader is referred to \cite{LLO}. Note that, without loss of generality, we can assume that $\beta=1$. Results for general $\beta$ may be obtained using a simple time rescaling.

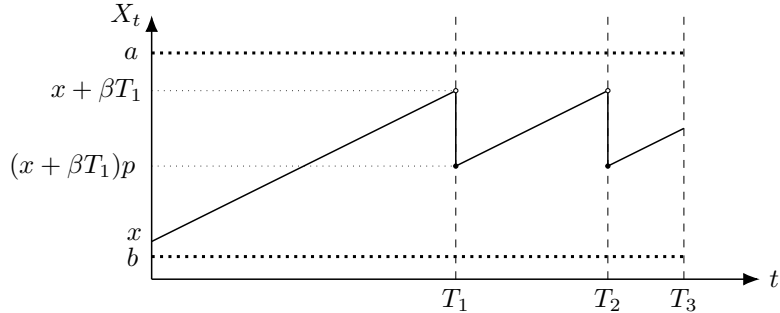
\begin{figure}[h!]
\begin{center}
\begin{tikzpicture}
\draw[line width=0.2mm,black,->] (0,-0.5) --(8,-0.5) node[right,align=left]     {$t$};
\draw[line width=0.2mm,black,->] (0,-0.5) --(0,3) node[left,align=left]     {$X_t$};
\draw[line width=0.2mm,black]  (0,0) -- (4,2)  -- (4,1) -- (6,2) -- (6,1) -- (7,1.5); 
\draw[line width=0.05mm,black,dashed]  (4,-0.5) -- (4,3); \node [below] at (4,-0.5) {\textcolor{black}{$T_1$}};
\draw[line width=0.05mm,black,dashed]  (6,-0.5) -- (6,3); \node [below] at (6,-0.5) {\textcolor{black}{$T_2$}};
\draw[line width=0.05mm,black,dashed]  (7,-0.5) -- (7,3); \node [below] at (7,-0.5) {\textcolor{black}{$T_3$}};
\draw[line width=0.4mm,black,dotted]  (0,-0.2) -- (7,-0.2); \node [left] at (-0.05,-0.2) {\textcolor{black}{$b$}};
\draw[line width=0.4mm,black,dotted]  (0,2.5) -- (7,2.5); \node [left] at (-0.05,2.5) {\textcolor{black}{$a$}};
\node [right] at (-0.45,0.1)   {\textcolor{black}{$x$}};
\draw[line width=0.1mm,black,dotted]  (0,2) -- (4,2); \node [right] at (-1.45,2)   {\textcolor{black}{$x+\beta T_1$}};
\draw [fill=white, draw=black] (4,2) circle [radius=0.8pt];
\draw [fill=black, draw=black] (4,1) circle [radius=0.8pt];
\draw[line width=0.1mm,black,dotted]  (0,1) -- (4,1); \node [right] at (-2,1)   {\textcolor{black}{$(x+\beta T_1)p$}};
\draw [fill=white, draw=black] (6,2) circle [radius=0.8pt];
\draw [fill=black, draw=black] (6,1) circle [radius=0.8pt];
\end{tikzpicture}
\end{center}\caption{Additive-increase
and multiplicative-decrease process path\label{fig: Fluid_model}}
\end{figure}

The additive-increase and the multiplicative-decrease process has various applications. For example, it appears as the fluid limit scaling for some queueing models (with binomial catastrophe rates) used in modeling population growth subject to mild catastrophes, see, e.g.,  \cite{Adan-Economou-Kapodistria-2009, Artalejo} and the references therein.  Such processes can be viewed as a particular example of the so-called shot noise model, which is used in models of earthquakes, avalanches or neuron firings. Moreover, this process is also used in the AIMD algorithm to model the Transmission Control Protocol (with $p=\tfrac{1}{2}$), the dominant protocol for data transfer over the internet (see \cite{15, Bert}).

\paragraph{Main contribution of the paper.}
The main objective of this work is to identify the laws of the exit (aka first passage) times
	\begin{equation}
	\label{exittimes_1}
	\tau_{\uparrow a}(x) = \inf\{t\geq 0\colon X_t> a \mid X_0=x\},
	\end{equation}
for $x\in[0, a),
$ and
	\begin{equation}
	\label{exittimes_2}
	\tau_{\downarrow b}(x)=\inf\{t \geq 0\colon X_t\leq  b\mid X_0=x\}
	\end{equation}
for $x\in [b,+\infty)$, 
and to present a unifying framework for their derivation.

The master plan, in a fluctuation theory of Markov processes with jumps in one direction, is to produce a great variety of exit identities
in terms of a few key functions only (the so-called {\em scale functions}, a term originating from diffusion theory; see e.g.\ \cite{Feller1, Feller2, Feller3, ItoMcKean}). These crucial functions appear 
in the Laplace transform of the exit times in \eqref{exittimes_1} and \eqref{exittimes_2}. Hence the first main step is to identify these `alphabet functions'. This is the main result in this paper, where we identify these scale functions. It is commonly believed that one needs at most three scale functions (or three letters) only.

In the case of L\'evy processes there are only {\em two} scale functions 
which are related with the various ways in which the process can exit the interval: only in a continuous way via the upper end of this interval and possibly by a jump via the lower end of this interval; see \cite{Avrametalrisks} for further discussion. With these scale or key functions at hand,  we are usually able to solve most other identities for the original process or for related transformed ones. Such transformations usually are obtained by a {\em reflection} (at the running infimum or supremum of the process), a refraction (only fixed portion of the process is reflected), a twisting of measure, or an additional randomization (via Poisson observation, subordination etc). Finally, this `scale-functions paradigm' is used in many applications appearing in queueing theory, actuarial science, optimization, mathematical finance or control theory, again exemplifying their importance; see Kyprianou \cite{Andreas} for details.

The above complete plan has been executed only for spectrally negative L\'evy processes - an overview of this theory is given in the book of Kyprianou \cite{Andreas}; see also \cite{AKP, Bertoin1, Bertoin2, Bingham, Doney, Emery, KP, Rogers, Zolotarev}. Most proofs in this theory only rely on two key properties: the Markov property and the skip-free property. Hence, there is hope that part of this master plan can be realized for other processes with downward jumps only as well. In this paper, we identify `alphabet' functions for an additive-increase and multiplicative-decrease process $X$ introduced formally above.

We show that for the additive-increase and multiplicative-decrease process, as well as for related transformed ones, we only need {\em two} scale functions to which all other Laplace transforms can be related, see Remarks \ref{rem:two_sided_up} and \ref{rem:two_sided_down} below.

There are already some results of this kind for other Markov processes; see  \cite{BG} for an overview. In risk and queueing theories some of the L\'evy-type results have been already generalized to the compound renewal processes; see \cite{AsmussenAlbrecher}. Other deep results have been obtained for diffusion processes (see \cite{diff1, diff2, diff3, 22, Lehoczky, diff4}). Similar results have been derived in the context of generalized spectrally negative Ornstein-Uhlenbeck processes (see \cite{Hadjiev, JJ, Novikov, Patie08}). Later, spectrally positive self-similar Markov processes were analyzed as well; see \cite{Liandme}. Other types of processes where scale functions have been successfully identified concern: L\'evy-driven Langevin-type processes (see \cite{czarnaetal}); affine processes (see \cite{AvramUsabel}); Markov addititive processes (see \cite{Jevme, KP2}); Segerdahl-Tichy processes (see \cite{AvramJose, ewame, PaulsenandGjessing, seger, tichy}).

The additive-increase and multiplicative-decrease processes under consideration have one substantial difference from the above mentioned L\'evy processes, namely the jump size depends on the position of the process $X$ prior to this jump. This produces substantial difficulties in solving the exit problems and handling them requires a new approach. In particular, the principal tools in L\'evy-type fluctuation theory, Wiener--Hopf factorization and It\^{o}'s excursion theory, are not available in our case. Instead, we rely on a {\em first-step analysis} that allows us to identify the two scale functions.
This is a novel approach for such problems, as we explain in more detail in the sequel.

\paragraph{First-step analysis as a main method.}
The proposed 
unifying approach for the computation of the exit identities relies on a first-step analysis based on finding the position of the considered processes right after the first jump epoch. This approach produces a recursive equation, which we subsequently solve. Instead of this approach, one could also implement the differential equation method, as often used for diffusion and L\'evy processes, and this would yield the same recursive equation as the first-step analysis proposed in this paper.

We believe that this approach could be used for other Markov processes having the skip-free property as well. Although this method has long been known in the literature (see e.g.\ \cite[Chap. XII]{AsmussenAlbrecher}, \cite[Part 2]{Cohen}),
we believe that it is the first time that this whole analysis (i.e., the identification of all scale functions) is done using only this method. To some extend we thus use only the Markov property and the structure of trajectories, whilst the classical tools of Wiener-Hopf factorization or martingale theory are not used.

We manage to solve the exit problems for reflected processes (where reflection occurs at the lower and upper fixed levels, as well as at the running infimum and maximum).
For such processes, our approach is more standard and it is based on using the method by Lehoczky \cite{Lehoczky}, and on the construction of a Kennedy martingale adapted to our model, followed by an  application of the optimal stopping theorem.
\medskip


\asymptotics{\paragraph{Asymptotic results.}
Starting from the seminal paper of Cram\'er the asymptotics of the probability
that the first downward pasage time \eqref{exittimes_2} is finite for large initial
values of $X_0=x$ has been analyzed in many scientific works.
Based on analysis of our scale functions using Karamata theorem
we managed to derive the asymptotics of ...}


\paragraph{Organisation of the paper.}
The remainder of this paper is structured as follows.
In Sections \ref{sec:main1} and \ref{sec:main2} we present our main results with their proofs. 
In Section \ref{sec-disc}, we close with a discussion of our results, alternative approaches, as well as possible future directions.

\section{Exit identities}
\label{sec:main1}
Given the initial position of the stochastic process, say $X_0=x$, the exit problems are solved by characterizing the Laplace-Stieltjes transforms of $\tau_{\uparrow a}(x)$ in \eqref{exittimes_1}, $\tau_{\downarrow b}(x)$ in \eqref{exittimes_2}, and
	\eqn{
	\label{tau-ab-def}
	\tau_{a,b}(x)=\min\{\tau_{\uparrow a}(x),\, \tau_{\downarrow b}(x)\},
	}
with $x\in (b,a)$. 
Note that the stochastic process $X$ will hit the threshold $a$ when crossing upwards as it can only move continuously upwards (creeping). On the other hand, it jumps over the threshold $b$ when crossing $b$ from above. As such (creeping versus jumping), the derivation of the two exit times encompasses different properties requiring typically very different techniques. Here, we propose a unifying framework for exit times,  both for one- and for two-sided exit problems. In what follows, suppose that the law $\P_{x}$ corresponds to the conditional version of $\P$ given that $X_0=x$. Analogously, $\E_x$ denotes the expectation with respect to $\P_x$. Let $\mathcal{F}_t$ be a right-continuous natural filtration of $X$ satisfying the usual conditions.

We now state our main results, given in Theorems \ref{mainthm1} and \ref{mainthm2}, that study upward and downward crossing problems, and Theorems \ref{mainthm3} and \ref{mainthm4} that study two-sided exit problems, below. We start by discussing upward one-sided exit problems:

\begin{thm}[Upward one-sided exit problem]\label{mainthm1}
Given $x\in (0,a)$, the Laplace-Stieltjes transform of the upward exit time $\tau_{\uparrow a}(x)$ of the  additive-increase
and multiplicative-decrease process $\left ( X_t\right)_{t\geq 0}$ is given by
	\begin{equation}
	\label{upwardone1}
	Z_\uparrow (w;  x,a):=\E_x [\mathrm{e}^{-w \tau_{\uparrow a}(x)}]=\frac{Z_\uparrow (w; 0,a)}{Z_\uparrow (w; 0,x)}, \qquad\mathrm{Re}[w]>0,
	\end{equation}
with
	\begin{equation}
	\label{H}
	Z_\uparrow (w; 0,x)=\frac{1}{
	1+w  \sum_{n=1}^\infty\frac{x^n}{n!}
	\prod_{i=1}^{n-1}(w+\lambda - \lambda p^i)}.
	\end{equation}
\end{thm}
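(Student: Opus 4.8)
The plan is to perform a first-step analysis at the first jump epoch $T_1$, which is $\mathrm{Exp}(\lambda)$-distributed and independent of the deterministic linear growth. Fix $\mathrm{Re}[w]>0$ and abbreviate $h_a(x):=Z_\uparrow(w;x,a)=\E_x[\mathrm{e}^{-w\tau_{\uparrow a}(x)}]$; since $|\mathrm{e}^{-w\tau}|\le 1$ all transforms below converge. Conditioning on $T_1$ yields two scenarios. On the event $\{T_1>a-x\}$ the path creeps up to $a$ at the deterministic time $a-x$ with no intervening jump, contributing $\mathrm{e}^{-w(a-x)}\P(T_1>a-x)=\mathrm{e}^{-(w+\lambda)(a-x)}$. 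On $\{T_1=t\le a-x\}$ the path reaches $x+t$, is multiplied down to $p(x+t)$, and then restarts afresh by the strong Markov property; integrating against the density $\lambda\mathrm{e}^{-\lambda t}$ and substituting $s=x+t$ gives the Volterra-type equation
\begin{equation}
h_a(x)=\mathrm{e}^{-(w+\lambda)(a-x)}+\lambda\,\mathrm{e}^{(w+\lambda)x}\int_x^a \mathrm{e}^{-(w+\lambda)s}\,h_a(ps)\,\dif s .
\end{equation}

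I would then convert this into a differential equation. Differentiating in $x$ and using the product rule, the derivative of the lower limit of the integral produces the term $-\lambda\,h_a(px)$, while the prefactor derivative recombines the remaining terms into $(w+\lambda)\,h_a(x)$, leaving the pantograph-type delay differential equation
\begin{equation}
h_a'(x)=(w+\lambda)\,h_a(x)-\lambda\,h_a(px),\qquad h_a(a)=1 ,
\end{equation}
the boundary value recording that $\tau_{\uparrow a}(a)=0$. Because $p\in(0,1)$ the delay is contractive ($px<x$), so the solution is pinned down by its behaviour near the origin, which is the structural feature that makes an explicit power series possible.

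The decisive observation is that this equation is linear and homogeneous, so all of its analytic solutions are scalar multiples of a single one. Writing $h_a(x)=\sum_{n\ge 0}c_n x^n$ and matching coefficients produces the recursion $(n+1)c_{n+1}=\big((w+\lambda)-\lambda p^n\big)c_n$, hence $c_n=c_0\,d_n$ with $d_0=1$ and $d_n=\tfrac{w}{n!}\prod_{i=1}^{n-1}(w+\lambda-\lambda p^i)$; that is, $h_a(x)=h_a(0)\,D(x)$ with $D(x):=1+w\sum_{n\ge1}\tfrac{x^n}{n!}\prod_{i=1}^{n-1}(w+\lambda-\lambda p^i)$. Imposing $h_a(a)=1$ forces $h_a(0)=1/D(a)$, hence $h_a(x)=D(x)/D(a)$. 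Reading this last identity with $a$ replaced by $x$ identifies $Z_\uparrow(w;0,x)=h_x(0)=1/D(x)$, which is exactly \eqref{H}, and substituting back yields $Z_\uparrow(w;x,a)=D(x)/D(a)=Z_\uparrow(w;0,a)/Z_\uparrow(w;0,x)$, which is \eqref{upwardone1}.

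The step I expect to require the most care is the identification of the probabilistic transform with this explicit analytic solution, i.e.\ uniqueness: a priori $h_a$ is only known to be a bounded solution of the integral equation. I would prove that bounded solutions are unique by noting that the difference $\delta$ of two such solutions satisfies $\delta(x)=\lambda\,\mathrm{e}^{(w+\lambda)x}\int_x^a\mathrm{e}^{-(w+\lambda)s}\delta(ps)\,\dif s$, and estimating in the supremum norm on $[0,a]$ gives $\sup|\delta|\le\frac{\lambda}{\lambda+\mathrm{Re}[w]}\sup|\delta|$, a genuine contraction for $\mathrm{Re}[w]>0$, forcing $\delta\equiv 0$. Two minor points round this off: the series defining $D$ is entire (from $|w+\lambda-\lambda p^i|\le|w|+\lambda$ one gets $|d_n|\le |w|(|w|+\lambda)^{n-1}/n!$, so $D(x)/D(a)$ is bounded on $[0,a]$), and $D(a)\ne0$ because $D(a)=1/h_a(0)$ with $h_a(0)=\E_0[\mathrm{e}^{-w\tau_{\uparrow a}(0)}]>0$ (there is positive probability $\mathrm{e}^{-\lambda a}$ of creeping from $0$ to $a$ before the first jump), extended to $\mathrm{Re}[w]>0$ by analyticity.
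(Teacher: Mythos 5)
Your proof is correct, and its core computation takes a genuinely different route from the paper's. Both arguments rest on the same first-step analysis at the first jump epoch, but the paper first proves the factorization \eqref{upwardone1} (strong Markov property plus upward creeping), uses it to turn the first-step identity into an equation for $Z_\uparrow(w;0,\cdot)$ alone, and then solves that equation by taking a Laplace transform in the spatial variable: this produces the functional equation $(\lambda+w-z)\tilde{Z}_\uparrow(w,z)=\frac{\lambda}{p}\tilde{Z}_\uparrow(w,zp^{-1})-1$, which is solved by a series ansatz in $1/z$ and then inverted term by term. You instead fix $a$, keep the general starting point $x$, differentiate the Volterra equation to obtain the pantograph equation $h_a'(x)=(w+\lambda)h_a(x)-\lambda h_a(px)$ with $h_a(a)=1$, and solve by a power series directly in $x$; the factorization \eqref{upwardone1} then falls out as a corollary of $h_a(x)=D(x)/D(a)$ rather than being needed as an input. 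What your route buys: no transform inversion, and a cleaner identification step --- your sup-norm contraction proving uniqueness of bounded solutions of the integral equation is more explicit than the paper's ``one can easily check'' verification of the functional equation (the authors themselves remark in the introduction that a differential-equation method would yield the same recursion; you have carried that program out). What the paper's route buys: the transform machinery is exactly the template reused for the harder downward problem of Theorem \ref{mainthm2}, where the answer \eqref{zdownarrowformula} is only piecewise smooth in $x$ (with kinks at $bp^{-k}$), so a single power series cannot represent it and your method would break down. One step you should make explicit: to apply your uniqueness lemma you need the candidate $D(x)/D(a)$ to satisfy the \emph{integral} equation, not merely the ODE with its boundary condition. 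This is one line: by the ODE, the function $x\mapsto \mathrm{e}^{-(w+\lambda)x}\frac{D(x)}{D(a)}-\lambda\int_x^a \mathrm{e}^{-(w+\lambda)s}\frac{D(ps)}{D(a)}\,\mathrm{d}s$ has vanishing derivative on $[0,a]$, hence equals its value $\mathrm{e}^{-(w+\lambda)a}$ at $x=a$, and rearranging gives the Volterra equation. With that line added, your argument is complete.
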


Note that this result matches the result reported in \cite[Section 4.2]{LL}, in which the authors instead use a martingale approach for the derivation of the recursive equation satisfied by the Laplace-Stieltjes transform of the exit time.

\begin{proof}
Note that
	$$
	\tau_{\uparrow a}(0) = \tau_{\uparrow x}(0) + \tau_{\uparrow a}(x)
	$$
for all $x\in(0,a)$, where $\tau_{\uparrow x}(0)$ and $\tau_{\uparrow a}(x)$ are independent. This is due to the strong Markov property, combined with the fact that $X_{\tau_{\uparrow a}(x)}=a$ since $X$ does not have upwards jumps (only creeps upwards). Hence,
	\begin{equation}
	\label{eq:thm1_aux1}
	Z_\uparrow (w; 0,a) = Z_\uparrow (w; 0,x) Z_\uparrow (w; x,a)
	\end{equation}
for all $q \in\mathbb{C}$ with $\mathrm{Re}[w]>0$, and for all $x\in(0,a)$. This implies that in order to prove Theorem \ref{mainthm1} it suffices to prove \eqref{H}.

For this purpose, we write
	\eqn{
	\tau_{\uparrow a}(0) \stackrel{d}{=} a \I_{\{T > a\}} + \left(T + \tau_{\uparrow a}(Tp) \right)\I_{\{T \le a\}},
	}
with  $T=\inf\{t\colon X_t<X_{t-}\}$ denoting the time of the first downward jump, which is exponentially distributed with intensity $\lambda$ and $\I_{\{\cdot\}}$ denoting the indicator function taking value one if the event inside the brackets is satisfied and zero otherwise. The above result readily implies that
	\begin{align*}
	Z_\uparrow (w; 0,a) & :=\E_0 [\mathrm{e}^{-w \tau_{\uparrow a}(0)}]= \mathrm{e}^{-(\lambda+w )a} + \int_{0}^{a} \lambda \mathrm{e}^{-\lambda t} \mathrm{e}^{-w t} \E_{tp} [\mathrm{e}^{-w\tau_{\uparrow a}(tp)}]\mathrm{d}t\\
	& = \mathrm{e}^{-(\lambda+w )a} + \lambda \int_{0}^{a} \mathrm{e}^{-(\lambda+w ) t} Z_\uparrow (w; tp,a) \mathrm{d}t.
	\end{align*}
In light of \eqref{eq:thm1_aux1}, this yields
	\begin{align*}
	Z_\uparrow (w; 0,a) & = \mathrm{e}^{-(\lambda+w )a} + \lambda Z_\uparrow (w; 0,a) \int_{0}^{a} \mathrm{e}^{-(\lambda+w ) t} \frac{1}{Z_\uparrow (w; 0,tp)} \mathrm{d}t.
	\end{align*}
The above may be rewritten as
	\begin{equation}
	\label{eq:thm1_aux3}
	1 = \frac{\mathrm{e}^{-(\lambda+w )a}}{Z_\uparrow (w; 0,a)} + \lambda \int_{0}^{a} \mathrm{e}^{-(\lambda+w ) t} \frac{1}{Z_\uparrow (w; 0,tp)} \mathrm{d}t.
	\end{equation}
Let $\tilde{Z}_\uparrow(w, s) = \int_0^\infty \mathrm{e}^{-sa} \frac{1}{Z_\uparrow (w; 0,a)} \mathrm{d}a$ (note that this is well defined for $s\in\mathbb{C}$ with $\mathrm{Re}[s]>\lambda+\mathrm{Re}[w]$). Multiplying both sides of \eqref{eq:thm1_aux3} by $\mathrm{e}^{-sa}$ and integrating over $a$ yields, after straightforward manipulations,
	\begin{align*}
	s\tilde{Z}_\uparrow(w, \lambda+w+s)& =- \frac{\lambda}{p}\tilde{Z}_\uparrow\left(w, \frac{\lambda+w+s}{p}\right)+1.
	\end{align*}
Setting $z = \lambda + w + s$  leads to the recursive relation
	$$
	(\lambda+w -z) \tilde{Z}_\uparrow(w, z)=  \frac{\lambda}{p} \tilde{Z}_\uparrow(w, zp^{-1})-1.
	$$
One can easily check that
	\begin{align*}
	\tilde{Z}_\uparrow(w, z)
	& =\sum_{n=0}^\infty \frac{1}{z^{n+1}} \prod_{i=0}^{n-1} (\lambda+w -\lambda p^i)\\
	&= \frac{1}{z} + w \sum_{n=1}^\infty \frac{1}{z^{n+1}} \prod_{i=1}^{n-1} (\lambda+w -\lambda p^i)
	\end{align*}
satisfies the above recursive relation. Alternatively, one can look for a solution in the form of $\tilde{Z}_\uparrow(w, z) = \sum_{n=-\infty}^\infty c_n(w) z^{-n}$
with unknown $c_n(w)$. The above relation readily implies that $c_n(w)=0$ for all $n\le 0$, $c_1(w)=1$ and, for $n \ge 2$,
	$$
	c_n(w) = w \prod_{i=1}^{n-1} (\lambda+w -\lambda p^i).
	$$
Equation \eqref{H} then follows immediately.
\end{proof}
\medskip

We continue by studying downward one-sided exit problems:

\begin{thm}[Downward one-sided exit problem]
\label{mainthm2}
Given $x\in (b,\infty)$, the Laplace-Stieltjes transform
	\begin{equation}\label{Zdownarrowdef1}
	Z_\downarrow(w;x,b):= \E_x[\mathrm{e}^{-w\tau_{\downarrow b}(x)}], \qquad\mathrm{Re}[w]>0,
	\end{equation}
of the downward crossing time $\tau_{\downarrow b}(x)$ of the additive-increase and multiplicative-decrease process $\left ( X_t\right)_{t\geq 0}$ equals
	\begin{align}
	Z_\downarrow(w;x,b)&=\frac{w}{w+\lambda}
	\sum_{k=1}^\infty
	\left(\frac{\lambda}{w+\lambda}\right)^k
	\mathbb{1}_{\{b<x\leq bp^{-k}\}}\nonumber\\
	&\quad+
	\frac{w}{\lambda(w+\lambda)}
	\sum_{k=0}^\infty
	\left(\frac{\lambda}{w+\lambda}\right)^{k+1}
	 \sum_{i=0}^{k}
	 \frac{
	 1-(1+\tilde{C}(w;b))p^{i-k}
	 }{ \prod\limits_{j=0,j\neq i}^{k}(1-p^{i-j})}
	\mathbb{1}_{\{bp^{-k}> x\}}\mathrm{e}^{(w+\lambda)p^i x},
	\label{zdownarrowformula}
	\end{align}
with
\begin{align}
\tilde{C}(w;b)&=
	\frac{
	\sum\limits_{l=0}^\infty
	\mathrm{e}^{-b (w+\lambda) p^{-l}}
	\frac{\lambda^{l}}{ (w+\lambda)^{l}\prod\limits_{i=1}^{l}(1- p^{-i})}
	}
	{\sum\limits_{l=0}^\infty
	\mathrm{e}^{-b(w+\lambda)p^{-l} }\frac{\lambda^{l}}{p^{l}(w+\lambda)^{l}\prod\limits_{i=1}^{l}(1-p^{-i})}
	}-1.\label{Eq:ConstantCLSTDown}
\end{align}	
\end{thm}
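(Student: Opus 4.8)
The plan is to carry out the same first-step analysis at the first jump epoch $T\sim\mathrm{Exp}(\lambda)$ used for Theorem~\ref{mainthm1}, now adapted to downward crossings. From $X_0=x>b$ the path grows as $X_t=x+t$ until $T$ and then lands at $p(x+T)$; because $X$ reaches level $b$ only through a jump, either $p(x+T)\le b$ and the exit occurs at $T$, or $p(x+T)>b$ and the process restarts from $p(x+T)$ by the strong Markov property. Writing $g(y):=Z_\downarrow(w;y,b)$ for $y>b$ and setting $g(y):=1$ for $y\le b$ (a convention used only to close the equation; $g$ need not be continuous at $b$), conditioning on $T$ yields the renewal-type integral equation
\begin{equation*}
g(x)=\int_0^\infty \lambda\,\mathrm{e}^{-(\lambda+w)t}\,g\big(p(x+t)\big)\,\mathrm{d}t
=\frac{\lambda}{p}\,\mathrm{e}^{(\lambda+w)x}\!\int_{px}^\infty \mathrm{e}^{-(\lambda+w)y/p}\,g(y)\,\mathrm{d}y,\qquad x>b,
\end{equation*}
the second form coming from the substitution $y=p(x+t)$. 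Differentiating in $x$ eliminates the integral and produces the pantograph-type functional differential equation $g'(x)=(\lambda+w)g(x)-\lambda\,g(px)$ on $x>b$, to be solved with $g\equiv1$ on $(0,b]$ and with $g$ bounded as $x\to\infty$ (in fact $g(x)\to0$, since the crossing time grows with the starting height and $\mathrm{Re}[w]>0$).

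Next I would solve this equation recursively on the intervals $I_m:=(bp^{-(m-1)},bp^{-m}]$, $m\ge1$, which partition $(b,\infty)$. For $x\in I_m$ one has $px\in I_{m-1}$ (and $px\le b$ when $m=1$), so the right-hand side only involves the solution already known on $I_{m-1}$, and the equation becomes a first-order linear ODE whose forcing is built from the exponentials $\mathrm{e}^{(\lambda+w)p^i x}$ inherited from $I_{m-1}$. Solving it shows that on $I_m$ the function $g$ is a finite combination $\sum_{i=0}^{m-1}c_{m,i}\,\mathrm{e}^{(\lambda+w)p^i x}+(\lambda/(w+\lambda))^m$: the homogeneous mode $\mathrm{e}^{(\lambda+w)x}$ contributes a new coefficient $c_{m,0}$ at each step, each particular solution introduces a denominator of the form $(\lambda+w)(1-p^{\,j})$ that, after combination, accumulates into the products $\prod_{j\ne i}(1-p^{\,i-j})$ appearing in \eqref{zdownarrowformula}, and the non-exponential part is forced to equal $(\lambda/(w+\lambda))^m$. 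Imposing continuity of $g$ at each interior endpoint $x=bp^{-(m-1)}$ fixes $c_{m,0}$ for $m\ge2$ in terms of the lower coefficients, so the entire family $\{c_{m,i}\}$ is pinned down by the single free constant inherited from $I_1$. A bookkeeping step then matches the constant contributions $(\lambda/(w+\lambda))^m$ on $I_m$ with the first sum $\tfrac{w}{w+\lambda}\sum_k(\lambda/(w+\lambda))^k\mathbb{1}_{\{b<x\le bp^{-k}\}}$ of \eqref{zdownarrowformula}, and the exponential contributions with the second sum.

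The remaining and genuinely delicate step is to fix this last free constant, equivalently to determine $\tilde C(w;b)$. On $I_m$ the fastest-growing mode is $c_{m,0}\,\mathrm{e}^{(\lambda+w)x}$, which at the right endpoint $x=bp^{-m}$ equals $c_{m,0}\,\mathrm{e}^{(\lambda+w)bp^{-m}}$; as this grows doubly exponentially in $m$, boundedness of $g$ forces $c_{m,0}$ to decay at exactly the rate $\mathrm{e}^{-(\lambda+w)bp^{-m}}$. Imposing this is a solvability (shooting) condition: I would propagate the coefficient recursion to $m\to\infty$, collect the surviving contributions to the dominant coefficient, and require the resulting series to converge to a bounded limit, which gives one linear equation for the free constant whose solution is precisely the ratio of the two convergent series in \eqref{Eq:ConstantCLSTDown}. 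There the factors $\mathrm{e}^{-b(w+\lambda)p^{-l}}$ are the values of the dominant mode at the endpoints $bp^{-l}$, the products $\prod_{i=1}^{l}(1-p^{-i})$ are the accumulated particular-solution denominators, and the extra $p^{-l}$ in the denominator series reflects the proportional contraction $x\mapsto px$ in the pantograph equation. Equivalently, one can substitute the interval-wise ansatz back into the integral equation above and read off the same condition from the convergence of $\int_{px}^\infty$. I expect this determination of $\tilde C$ through the large-$m$ boundedness analysis, together with the resummation of the piecewise solution into \eqref{zdownarrowformula}, to be the main obstacle; once the pantograph equation is in hand, the first-step analysis and the interval-by-interval integration are routine.
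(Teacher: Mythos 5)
Your proposal is correct in substance, but it takes a genuinely different route from the paper's proof. Both start from the same first-step analysis at the first jump epoch $T$ (your integral equation is exactly the expectation form of \eqref{eq:tau_down}, with the convention $g\equiv 1$ below $b$). From there the paper stays in the transform domain: it introduces the double transform $\tilde Z_\downarrow(w,z;b)=\int_b^\infty\mathrm{e}^{-zx}\E_x[\mathrm{e}^{-w\tau_{\downarrow b}(x)}]\,\mathrm{d}x$, derives the functional recursion \eqref{Eq:recursion_LST_DC} under $z\mapsto zp^{-1}$, iterates it, fixes the single unknown constant $\tilde Z_\downarrow(w,(w+\lambda)p^{-1};b)$ by multiplying by $(w+\lambda-z)$ and setting $z=w+\lambda$ (a pole-cancellation forced by finiteness of the transform at that point), and finally inverts in $z$ to obtain \eqref{zdownarrowformula}. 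You instead work entirely in the spatial variable: pantograph equation, method of steps on the intervals $(bp^{-(m-1)},bp^{-m}]$, and a shooting condition at infinity. Your structural claims all check out: the constant part on the $m$-th interval is $(\lambda/(\lambda+w))^m$, which is exactly what the first line of \eqref{zdownarrowformula} resums to; the coefficient recursion is $c_{m,i}=\lambda c_{m-1,i-1}/\bigl((\lambda+w)(1-p^{i})\bigr)$, producing the advertised products; and continuity at the endpoints (valid by your integral equation) leaves a single free constant. Moreover, your boundedness condition is precisely the spatial counterpart of the paper's pole cancellation: an uncancelled $\mathrm{e}^{(\lambda+w)x}$ mode at infinity is the same thing as a pole of the transform at $z=w+\lambda$. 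Two points must be supplied to close your argument. First, uniqueness of the bounded solution, so that shooting is legitimate: this is clean, since $(\mathcal{T}f)(x)=\int_0^\infty\lambda\mathrm{e}^{-(\lambda+w)t}f(p(x+t))\,\mathrm{d}t$ is a contraction on $L^\infty(b,\infty)$ with factor $\lambda/(\lambda+\mathrm{Re}[w])<1$, any bounded solution of your pantograph equation satisfies the integral equation (integrate $(\mathrm{e}^{-(\lambda+w)x}g(x))'=-\lambda\mathrm{e}^{-(\lambda+w)x}g(px)$ and use boundedness to discard the boundary term at $+\infty$), and the true LST is one such bounded solution. Second, the actual execution of the shooting computation yielding $\tilde C(w;b)$ in \eqref{Eq:ConstantCLSTDown}: you only sketch this, and it is the heavy step; here the paper's transform route pays off, as the pole cancellation is a single linear equation whose coefficients are already closed-form series (see \eqref{Eq:Constant_LS_Sol}), whereas your route requires propagating the coefficient recursion to $m=\infty$ and resumming. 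What your approach buys in exchange is elementarity and transparency: no transform inversion is needed, the piecewise-exponential structure of \eqref{zdownarrowformula} appears directly, and it makes plain that only the exponentials $\mathrm{e}^{(w+\lambda)p^{i}x}$ with $i\le m-1$ can occur on the $m$-th interval --- which, incidentally, confirms that the indicator in \eqref{zdownarrowformula} should be read as $\mathbb{1}_{\{bp^{-k}<x\}}$, as in the inversion step of the paper's proof.
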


\begin{proof}
In order to compute the first downward crossing time, we employ a first-step analysis approach yielding
	\begin{align} \label{eq:tau_down}
	\tau_{\downarrow b}(x)\stackrel{d}{=}
	T\mathbb{1}_{\{(x  + T)p\leq b\}}+
	\left(T+\tau_{\downarrow b}((x  + T)p)
	\right)
	\mathbb{1}_{\{(x  + T)p> b\}},
	\end{align}
with  $T$ denoting the time of the first downward jump, which is exponentially distributed with intensity $\lambda$. Let
\[\tilde{Z}_\downarrow(w,z;b):=\int_{b}^\infty \mathrm{e}^{-z x }\mathbb{E}_x [\mathrm{e}^{-w\tau_{\downarrow b}(x)}]\mathrm{d}x,\quad \text{with }\mathrm{Re}[z]>\lambda+\mathrm{Re}[w],\]
then, the above result, after cumbersome but straightforward computations, implies that
	\begin{align}
	\tilde{Z}_\downarrow (w, z; b)
	&=\frac{\lambda}{w+\lambda }\left(\frac{  \mathrm{e}^{b  (w+\lambda)(1-p^{-1})-bz}-\mathrm{e}^{-bz p^{-1}}}{w+\lambda-z }+\frac{\mathrm{e}^{-b z }-\mathrm{e}^{-b z p^{-1}}}{z}\right)\nonumber\\
	&\quad -\frac{\lambda \mathrm{e}^{b(w+\lambda- z ) }}{p (w+\lambda-  z )}\tilde{Z}_\downarrow (w, (w+\lambda)p^{-1}; b)
	+\frac{\lambda}{p (w+\lambda-  z )}\tilde{Z}_\downarrow (w, z p^{-1}; b).
	\label{Eq:recursion_LST_DC}
\end{align}

All in all, the above can be written as	
	\begin{align}\label{recursion}
	\tilde{Z}_\downarrow (w, z; b) &=C(w, z)+D(w, z)\tilde{Z}_\downarrow (w, z p^{-1}; b)
	\end{align}
yielding upon iterating
	\begin{align*}
	\tilde{Z}_\downarrow (w, z; b) &=\sum_{k=0}^\infty C(w, z p^{-k})\prod_{i=0}^{k-1}D(w, z p^{-i})+\lim_{k\to\infty}\tilde{Z}_\downarrow (w, z p^{-k};b)\prod_{i=0}^{k-1}D(w, z p^{-i}).
	\end{align*}
Note that $\lim_{k\to\infty}\tilde{Z}_\downarrow (w, z p^{-k};b)=0$
and that
$\lim_{k\to\infty}\prod_{i=0}^{k-1}D(w, z p^{-i})=0$
 for all $z\in\mathbb{C}$ with $\mathrm{Re}[z]>\lambda+\mathrm{Re}[w]$. 	All in all,
	\begin{align}
	\tilde{Z}_\downarrow (w, z; b) &=\frac{\lambda}{w+\lambda}
	\sum_{k=0}^\infty
	\left(\frac{
	\mathrm{e}^{b  (w+\lambda)(1-p^{-1}) -  bz p^{-k}}-\mathrm{e}^{- b z p^{-k-1}}}{w+\lambda -  z p^{-k}}
	+\frac{\mathrm{e}^{-b z p^{-k}}-\mathrm{e}^{- b  z p^{-k-1}}}{z p^{-k}}\right)
	\prod_{i=0}^{k-1}\frac{\lambda}{p (w+\lambda -  z p^{-i})}\nonumber\\
	&\qquad -\tilde{Z}_\downarrow (w, (w+\lambda ) p^{-1};b)\sum_{k=0}^\infty  \mathrm{e}^{b(w+\lambda - z p^{-k}) }
	\prod_{i=0}^{k}\frac{\lambda}{p (w+\lambda -  z p^{-i})}.
	\label{Eq:Predownone}
	\end{align}
In order to compute $\tilde{Z}_\downarrow (w, (w+\lambda ) p^{-1}; b)$, we first multiplying \eqref{Eq:Predownone} with $w+\lambda - z $. After simplifying the resulting expressions, we set $z =w+\lambda$ rendering the LHS of \eqref{Eq:Predownone} zero. This yields after some straightforward algebraic manipulations
\begin{align}
	\tilde{Z}_\downarrow (w, (w+\lambda ) p^{-1}; b)
	&=
	\frac{\frac{\lambda}{w+\lambda}
	\sum_{k=1}^\infty
	\left(\frac{
	\mathrm{e}^{b  (w+\lambda)(1-p^{-1}-p^{-k})}-\mathrm{e}^{- b (w+\lambda) p^{-k-1}}}{(w+\lambda)(1 - p^{-k})}
	+\frac{\mathrm{e}^{-b (w+\lambda) p^{-k}}-\mathrm{e}^{-b  (w+\lambda) p^{-k-1}}}{(w+\lambda)p^{-k}}\right)
	\prod_{i=1}^{k-1}\frac{\lambda}{p (w+\lambda)(1- p^{-i})}
	}
	{\sum_{k=0}^\infty  \mathrm{e}^{b(w+\lambda)(1- p^{-k}) }\prod_{i=1}^{k}\frac{\lambda}{p (w+\lambda)(1-p^{-i})}
	}\nonumber\\
	&=\frac{p}{w+\lambda}\mathrm{e}^{-b(w+\lambda)p^{-1}}
	-
	\frac{p}{w+\lambda}\mathrm{e}^{-b(w+\lambda)p^{-1}}
	\frac{1
	}
	{\sum_{k=0}^\infty  \mathrm{e}^{-b(w+\lambda)p^{-k} }\prod_{i=1}^{k}\frac{\lambda}{p (w+\lambda)(1-p^{-i})}
	}\nonumber\\
	&\qquad
	+\frac{\lambda}{w+\lambda}\mathrm{e}^{-b(w+\lambda)p^{-1}}
	\Bigg(
	\frac{
	\sum_{k=1}^\infty
	\frac{\mathrm{e}^{-b (w+\lambda) p^{-k}}}{(w+\lambda)p^{-k}}
	\prod_{i=1}^{k-1}\frac{\lambda}{p (w+\lambda)(1- p^{-i})}
	}
	{\sum_{k=0}^\infty  \mathrm{e}^{-b(w+\lambda)p^{-k} }\prod_{i=1}^{k}\frac{\lambda}{p (w+\lambda)(1-p^{-i})}
	}\nonumber\\
	&\qquad
	-
	\frac{
	\sum_{k=1}^\infty
	\left(\frac{
	\mathrm{e}^{- b (w+\lambda) p^{-k-1}}}{(w+\lambda)(1 - p^{-k})}
	+\frac{\mathrm{e}^{-b  (w+\lambda) p^{-k-1}}}{(w+\lambda)p^{-k}}\right)
	\prod_{i=1}^{k-1}\frac{\lambda}{p (w+\lambda)(1- p^{-i})}
	}
	{\sum_{k=0}^\infty  \mathrm{e}^{-b(w+\lambda)p^{-k} }\prod_{i=1}^{k}\frac{\lambda}{p (w+\lambda)(1-p^{-i})}
	}
	\Bigg)\nonumber\\
	&=\frac{p}{w+\lambda}\mathrm{e}^{-b(w+\lambda)p^{-1}}
	-\frac{w}{w+\lambda}\mathrm{e}^{-b(w+\lambda) }
	\frac{
	\sum_{k=1}^\infty
	\frac{\mathrm{e}^{-b (w+\lambda) p^{-k}}}{(w+\lambda)p^{-k}}
	\prod_{i=1}^{k-1}\frac{\lambda}{p (w+\lambda)(1- p^{-i})}
	}
	{\sum_{k=0}^\infty  \mathrm{e}^{-b(w+\lambda)p^{-k} }\prod_{i=1}^{k}\frac{\lambda}{p (w+\lambda)(1-p^{-i})}
	}
	.\label{Eq:Constant_LS_Sol}
\end{align}
In light of \eqref{Eq:Constant_LS_Sol}, Equation \eqref{Eq:Predownone} yields
	\begin{align}
	\label{downone}
	\tilde{Z}_\downarrow (w, z; b)
	&=\frac{\lambda}{w+\lambda}
	\sum_{k=0}^\infty
	\left(-\frac{\mathrm{e}^{- b z p^{-k-1}}}{w+\lambda -  z p^{-k}}
	+\frac{\mathrm{e}^{-b z p^{-k}}-\mathrm{e}^{- b  z p^{-k-1}}}{z p^{-k}}\right)
	\prod_{i=0}^{k-1}\frac{\lambda}{p (w+\lambda -  z p^{-i})}\\
	&\qquad +\frac{w}{w+\lambda}
	\frac{
	\sum_{l=1}^\infty
	\frac{\mathrm{e}^{-b (w+\lambda) p^{-l}}}{(w+\lambda)p^{-l}}
	\prod_{i=1}^{l-1}\frac{\lambda}{p (w+\lambda)(1- p^{-i})}
	}
	{\sum_{l=0}^\infty  \mathrm{e}^{-b(w+\lambda)p^{-l} }\prod_{i=1}^{l}\frac{\lambda}{p (w+\lambda)(1-p^{-i})}
	}
	\sum_{k=0}^\infty  \mathrm{e}^{- bz p^{-k} }
	\prod_{i=0}^{k}\frac{\lambda}{p (w+\lambda -  z p^{-i})}\nonumber\\
	%
	&=\frac{\mathrm{e}^{-bz}}{z}
	-
	\frac{w}{w+\lambda}
	\sum_{k=0}^\infty
	\frac{\mathrm{e}^{- bz p^{-k} }}{zp^{-k}}
	 \prod_{i=0}^{k-1}\frac{\lambda}{p (w+\lambda -  z p^{-i})}
	 +
	 \frac{w}{w+\lambda}\frac{p}{\lambda}
	\tilde{C}(w;b)
	\sum_{k=0}^\infty
	\mathrm{e}^{- bz p^{-k} }
	 \prod_{i=0}^{k}\frac{\lambda}{p (w+\lambda -  z p^{-i})},
	 \nonumber
	\end{align}
with $\tilde{C}(w;b)$	 given in \eqref{Eq:ConstantCLSTDown}.

We proceed now with the inversion of the LST with respect to $z$. To this end, we rewrite \eqref{downone} by expanding the products into summations as follows
	\begin{align*}
	\tilde{Z}_\downarrow(w, z;b)
	&=\frac{\mathrm{e}^{-bz}}{z}
	-
	\frac{w}{w+\lambda}
	\sum_{k=0}^\infty
	\frac{\mathrm{e}^{- bz p^{-k} }\lambda^k}{z}
	 \prod_{i=0}^{k-1}\frac{1}{w+\lambda -  z p^{-i}}\\
	 &\quad+
	 \frac{w}{w+\lambda}\frac{p}{\lambda}
	\tilde{C}(w;b)
	\sum_{k=0}^\infty
	\frac{\mathrm{e}^{- bz p^{-k} }\lambda^{k+1}}{p^{k+1}}
	 \prod_{i=0}^{k}\frac{1}{w+\lambda -  z p^{-i}}\\
	 &=\frac{\lambda}{w+\lambda}\frac{\mathrm{e}^{-bz}}{z}
	+
	\frac{w}{w+\lambda}
	\sum_{k=1}^\infty
	\left(\frac{\lambda}{w+\lambda}\right)^k
	 \left(
	 \frac{1}{(w+\lambda)}
	 \sum_{i=0}^{k-1}
	 \frac{1}{ \prod\limits_{j=0,j\neq i}^{k-1}(1-p^{i-j})}
	 \frac{\mathrm{e}^{- bz p^{-k} }}{z-(w+\lambda)p^{i}}
	 - \frac{\mathrm{e}^{- bz p^{-k} }}{z}
	 \right)\\
	 &\quad-
	 \frac{w}{w+\lambda}\frac{p}{\lambda}
	\tilde{C}(w;b)
	\sum_{k=0}^\infty
	\left(\frac{\lambda}{ p(w+\lambda)}\right)^{k+1}
	 \sum_{i=0}^{k}
	  \frac{p^i}{ \prod\limits_{j=0,j\neq i}^{k}(1-p^{i-j})}
	 \frac{\mathrm{e}^{- bz p^{-k} }}{z-(w+\lambda) p^{i}}\\
	 &=\frac{w}{w+\lambda}
	\sum_{k=1}^\infty
	\left(\frac{\lambda}{w+\lambda}\right)^k
	 \frac{\mathrm{e}^{-bz}-\mathrm{e}^{- bz p^{-k} }}{z}
	+
	\frac{w}{(w+\lambda)^2}
	\sum_{k=1}^\infty
	\left(\frac{\lambda}{w+\lambda}\right)^k
	 \sum_{i=0}^{k-1}
	 \frac{1}{ \prod\limits_{j=0,j\neq i}^{k-1}(1-p^{i-j})}
	 \frac{\mathrm{e}^{- bz p^{-k} }}{z-(w+\lambda)p^{i}}
	 \\
	 &\quad-
	 \frac{w}{w+\lambda}\frac{p}{\lambda}
	\tilde{C}(w;b)
	\sum_{k=0}^\infty
	\left(\frac{\lambda}{ w+\lambda}\right)^{k+1}
	 \sum_{i=0}^{k}
	  \frac{p^{i-k-1}}{ \prod\limits_{j=0,j\neq i}^{k}(1-p^{i-j})}
	 \frac{\mathrm{e}^{- bz p^{-k} }}{z-(w+\lambda) p^{i}}\\
	 &=\frac{w}{w+\lambda}
	\sum_{k=1}^\infty
	\left(\frac{\lambda}{w+\lambda}\right)^k
	\int_{b}^{\infty} \mathrm{e}^{-zx}\mathbb{1}_{\{b<x\leq bp^{-k}\}}\mathrm{d}x\\
	&\quad+
	\frac{w}{\lambda(w+\lambda)}
	\sum_{k=0}^\infty
	\left(\frac{\lambda}{w+\lambda}\right)^{k+1}
	 \sum_{i=0}^{k}
	 \frac{1-p^{i-k}}{ \prod\limits_{j=0,j\neq i}^{k}(1-p^{i-j})}
	\int_{b}^{\infty} \mathrm{e}^{-zx}\mathbb{1}_{\{bp^{-k}<x\}}\mathrm{e}^{(w+\lambda)p^i x}\mathrm{d}x
	 \\
	 &\quad-
	 \frac{w}{\lambda(w+\lambda)}
	 \tilde{C}(w;b)
	\sum_{k=0}^\infty
	\left(\frac{\lambda}{ w+\lambda}\right)^{k+1}
	 \sum_{i=0}^{k}
	  \frac{p^{i-k}}{ \prod\limits_{j=0,j\neq i}^{k}(1-p^{i-j})}
	\int_{b}^{\infty} \mathrm{e}^{-zx}\mathbb{1}_{\{bp^{-k}<x\}}\mathrm{e}^{(w+\lambda)p^i x}\mathrm{d}x\\
	&=\frac{w}{w+\lambda}
	\sum_{k=1}^\infty
	\left(\frac{\lambda}{w+\lambda}\right)^k
	\int_{b}^{\infty} \mathrm{e}^{-zx}\mathbb{1}_{\{b<x\leq bp^{-k}\}}\mathrm{d}x\\
	&\quad+
	\frac{w}{\lambda(w+\lambda)}
	\sum_{k=0}^\infty
	\left(\frac{\lambda}{w+\lambda}\right)^{k+1}
	 \sum_{i=0}^{k}
	 \frac{
	 1-(1+\tilde{C}(w;b))p^{i-k}
	 }{ \prod\limits_{j=0,j\neq i}^{k}(1-p^{i-j})}
	\int_{b}^{\infty} \mathrm{e}^{-zx}\mathbb{1}_{\{bp^{-k}<x\}}\mathrm{e}^{(w+\lambda)p^i x}\mathrm{d}x
	 ,
	\end{align*}
which completes the proof of the theorem.	
\end{proof}

\begin{remark}[Alternative approach]
\label{rem-alt-approach}
{\rm Instead of the above solution, one could equivalently consider that
	\begin{align}
	\tilde{Z}_\downarrow (w, z; b)=\sum_{n=-\infty}^\infty c_n(w;b) z^{n},
	\label{Eq:ZDownLaurent}
	\end{align}
and substitute this into the recursion \eqref{Eq:recursion_LST_DC}. This yields
	\begin{align}
	(w+\lambda-z)\sum_{n=-\infty}^\infty c_n(w;b) z^{n}	
	&=\sum_{n=0}^\infty a_n(w;b) z^{n} +\sum_{n=-\infty}^\infty \lambda c_n(w;b)p^{-n-1} z^{n},
	\label{Eq:RecLaurent}
	\end{align}
with
	\begin{align*}
	\sum_{n=0}^\infty a_n(w;b) z^{n}	
	&=\frac{\lambda}{w+\lambda }
	\left( \mathrm{e}^{b  (w+\lambda)(1-p^{-1})-bz}-\mathrm{e}^{-bz p^{-1}}+
	\left(\mathrm{e}^{-b z }-\mathrm{e}^{-b z p^{-1}}\right)\frac{w+\lambda-z}{z}\right)\\
	&\quad -\frac{\lambda }{p}\mathrm{e}^{b(w+\lambda- z ) }\tilde{Z}_\downarrow (w, (w+\lambda)p^{-1}; b) .
	\end{align*}
Equating the coefficients of $z^n$, $n\in\mathbb{Z}$, yields
	\begin{align}
	c_n(w;b)  &=\frac{a_n(w;b) + c_{n-1}(w;b)}{w+\lambda(1 -p^{-n-1})}\nonumber\\
		       &=\sum_{k=0}^n\frac{a_k(w;b)}{\prod\limits_{i=k+1}^{n+1}(w+\lambda(1 -p^{-i}))}
		       		+\frac{c_{-1}(w;b)}{\prod\limits_{i=1}^{n+1}(w+\lambda(1 -p^{-i}))},
					\qquad n\geq0,\label{Eq:ZDownCoefficientsPositive}\\
	c_{-n}(w;b)  &=\left(w+\lambda(1 -p^{n-2})\right) c_{-n+1}(w;b)\nonumber\\
		          &=\prod\limits_{i=0}^{n-2}\left(w+\lambda(1 -p^{-i})\right) c_{-1}(w;b),
					\qquad n\geq2,\label{Eq:ZDownCoefficientsNegative}
	\end{align}
with $c_{-1}(w;b)=\mathbb{1}_{\{w=0\}}$.
On the one hand, note that $\lim_{n\to\infty}c_{-n}(w;b) =0$. On the other hand, taking the limit in the LHS of Equation \eqref{Eq:ZDownCoefficientsNegative} yields
	\begin{align*}
	\prod\limits_{i=0}^{\infty}\left(w+\lambda(1 -p^{-i})\right) =\begin{cases}
	0,&\text{ if }w=0,\\
	\infty,&\text{otherwise}.
	\end{cases}
	\end{align*}
Moreover, $\tilde{Z}_\downarrow(0, z;b):=\int_{b}^\infty \mathrm{e}^{-z x }\mathrm{d}x=\mathrm{e}^{-bz}/z$.
So all in all, $c_{-1}(w;b)=\mathbb{1}_{\{w=0\}}$ and $c_{-n}(w;b)=0$ for all $n\geq 2$. Note that this is consistent with the result from  Equation \eqref{downone}, as $\lim_{z\to0}z \tilde{Z}_\downarrow (w, z; b)=1-\mathbb{1}_{\{w\neq0\}}=\mathbb{1}_{\{w=0\}}$.

Furthermore, one can compute  $\tilde{Z}_\downarrow (w, (w+\lambda)p^{-1}; b) $ by noting that the sequence $a_n(w;b)$ can be decomposed into
	\begin{align*}
	a_n(w;b)=a_{n,1}(w;b)+\tilde{Z}_\downarrow (w, (w+\lambda)p^{-1}; b) a_{n,2}(w;b),
	\end{align*}
where the sub-sequences $a_{n,1}(w;b)$ and $a_{n,2}(w;b)$ are fully known (they are the coefficients of the Taylor expansions of the exponents). Then, by the definition of \eqref{Eq:ZDownLaurent} and by \eqref{Eq:ZDownCoefficientsPositive}--\eqref{Eq:ZDownCoefficientsNegative},
	\begin{align*}
	\tilde{Z}_\downarrow (w, (w+\lambda)p^{-1}; b) &=\sum_{n=0}^\infty \sum_{k=0}^n
	\frac{\left(a_{k,1}(w;b) +\tilde{Z}_\downarrow (w, (w+\lambda)p^{-1}; b) a_{k,2}(w;b)\right)\left(\frac{w+\lambda}{p}\right)^{n}}
	{\prod\limits_{i=k+1}^{n+1}(w+\lambda(1 -p^{-i}))}\\
	&\quad+ \mathbb{1}_{\{w=0\}}\sum_{n=-1}^\infty
	\frac{\left(\frac{w+\lambda}{p}\right)^{n}}{\prod\limits_{i=1}^{n+1}(w+\lambda(1 -p^{-i}))}
	,
	\end{align*}
which yields after straightforward computations the value of $\tilde{Z}_\downarrow (w, (w+\lambda)p^{-1}; b)$.
Note that this form of the double LST does not permit a straightforward inversion with respect to $z$ as done in the proof of Theorem \ref{mainthm2}.}
\end{remark}

%
%

Having established the laws governing the one-sided exit problems  (up-crossing by creeping or down-crossing by a jump), we proceed with the next two theorems, in which we analyze the double-sided exit problems:

\begin{thm}[Upward two-sided exit problem]
\label{mainthm3}
For $x\in [b,a)$,
the Laplace-Stieltjes transform
	\begin{equation}
	\label{eq:upward1}
	L_{\uparrow}(w;x,a,b) := \E_x [\mathrm{e}^{-w \tau_{\uparrow a}(x)}\I_{\{\tau_{\uparrow a} (x) < \tau_{\downarrow b}(x)\}}]
	\end{equation}
of the upward two-sided exit time $\tau_{\uparrow a}(x)$ of the additive-increase and multiplicative-decrease process $\left ( X_t\right)_{t\geq 0}$ equals
	\begin{equation}
	\label{eq:upward2}
	L_{\uparrow}(w;x,a,b)=\frac{L_{\uparrow}(w;b,a,b)}{L_{\uparrow}(w;b,x,b)},
	\end{equation}
and $L_{\uparrow}(w;b,x,b)$ (similarly for $L_{\uparrow}(w;b,a,b)$) may be found recursively for all values of $x$ as follows:\\
For $x \in (b,bp^{-1}]$,
	\begin{equation}
	\label{eq:upward3}
	L_{\uparrow}(w;b,x,b) = \mathrm{e}^{-(w+\lambda) (x-b)}.
	\end{equation}
For  $x\in (b/p^{k}, b/p^{k+1}]$ and all  $k\geq 1$,
	\begin{equation}
	\label{eq:upward4}
	L_{\uparrow}(w;b,x,b) = \frac{\mathrm{e}^{-(w+\lambda) (x-b/p^k)}}{\frac{1}{L_{\uparrow}(w;b,b/p^k,b)} -  \int_0^{x-b/p^k} \frac{\lambda \mathrm{e}^{-(\lambda+w ) t}}
	{L_{\uparrow}(w;b,b/p^{k-1} + pt,b)}\mathrm{d}t},
	\end{equation}
where it is assumed that recursively $L_{\uparrow}(w;b,y,b)$ is known for all $y \le b/p^k$, the starting point of the recursion being given by \eqref{eq:upward3}.
\end{thm}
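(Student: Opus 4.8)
My plan is to follow the architecture of the proof of Theorem~\ref{mainthm1}: first derive the multiplicative factorisation \eqref{eq:upward2} from the strong Markov property together with the fact that $X$ creeps upward, and then reduce the whole statement to the single-variable function $\phi(x):=L_{\uparrow}(w;b,x,b)$, for which I would set up a first-step integral equation and convert it into the recursion \eqref{eq:upward4} by differentiating in the target level $x$. For the factorisation, I fix $b\le x<a$ and use that, because $X$ has no upward jumps, any path going from $b$ to $a$ without first falling to or below $b$ must creep through the intermediate level $x$, reaching it exactly (so that $X_{\tau_{\uparrow x}(b)}=x$) and without having dropped below $b$; from $x$ it must then reach $a$ before going below $b$. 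By the strong Markov property these two phases are independent and their durations add, so on the relevant event $\tau_{\uparrow a}(b)=\tau_{\uparrow x}(b)+\tau_{\uparrow a}(x)$, and the constraint of staying above $b$ splits across the two phases. This gives $L_{\uparrow}(w;b,a,b)=L_{\uparrow}(w;b,x,b)\,L_{\uparrow}(w;x,a,b)$, the exact analogue of \eqref{eq:thm1_aux1} now carrying the extra indicator, and hence \eqref{eq:upward2}.

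To compute $\phi$, I would condition on the first jump time $T\sim\mathrm{Exp}(\lambda)$ for the process started at $b$ and aiming at level $x$. If $T\ge x-b$ the path creeps to $x$ with no jump and contributes $\mathrm{e}^{-(w+\lambda)(x-b)}$; if $T<x-b$ it sits at $b+T$ and jumps to $(b+T)p$, which remains above $b$ precisely when $T>b(1-p)/p$ (equivalently $b+T>b/p$), in which case it lands in $(b,x)$ and restarts. For $x\in(b,b/p]$ the jump branch is empty, since the threshold $b(1-p)/p$ exceeds $x-b$, so only the no-jump term survives and $\phi(x)=\mathrm{e}^{-(w+\lambda)(x-b)}$, which is \eqref{eq:upward3}. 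For $x>b/p$, inserting the factorisation in the form $L_{\uparrow}(w;(b+T)p,x,b)=\phi(x)/\phi((b+T)p)$ and solving for $\phi(x)$ yields the integral equation
\begin{equation}
\frac{\mathrm{e}^{-(w+\lambda)(x-b)}}{\phi(x)}=1-\int_{b(1-p)/p}^{\,x-b}\lambda\,\mathrm{e}^{-(w+\lambda)T}\,\frac{1}{\phi((b+T)p)}\,\mathrm{d}T. \tag{$\star$}
\end{equation}

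The decisive step is to differentiate $(\star)$ in $x$. The lower limit is constant, so only the upper limit contributes, and since $(b+T)p=xp$ at $T=x-b$ I obtain the autonomous relation
\begin{equation}
\frac{\mathrm{d}}{\mathrm{d}x}\!\left[\frac{\mathrm{e}^{-(w+\lambda)(x-b)}}{\phi(x)}\right]=-\lambda\,\mathrm{e}^{-(w+\lambda)(x-b)}\,\frac{1}{\phi(xp)}. \tag{$\star\star$}
\end{equation}
For $x\in(b/p^k,b/p^{k+1}]$ with $k\ge1$ one has $xp\in(b/p^{k-1},b/p^k]$, the preceding interval, so $\phi(xp)$ is already known; integrating $(\star\star)$ from $b/p^k$ to $x$ and substituting $y=b/p^k+t$ (so that $yp=b/p^{k-1}+pt$ and $\mathrm{e}^{-(w+\lambda)(y-b)}=\mathrm{e}^{-(w+\lambda)(b/p^k-b)}\mathrm{e}^{-(w+\lambda)t}$) turns the left-hand side into $\mathrm{e}^{-(w+\lambda)(x-b/p^k)}/\phi(x)-1/\phi(b/p^k)$ and reproduces \eqref{eq:upward4} after clearing the common factor $\mathrm{e}^{-(w+\lambda)(b/p^k-b)}$. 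The recursion is well-founded: each interval uses only the value $\phi(b/p^k)$ at its left endpoint and the values of $\phi$ on the immediately preceding interval, with base case \eqref{eq:upward3}.

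The main obstacle, and the point where this argument departs from the one-sided proofs, is that the moving threshold $b(1-p)/p$ and the state-dependent indicator $\I_{\{(b+T)p>b\}}$ fragment the half-line into the geometric pieces $(b/p^k,b/p^{k+1}]$, which rules out a single global Laplace transform in space of the kind used for Theorems~\ref{mainthm1}--\ref{mainthm2}. The resolution is exactly to differentiate in the target level $x$ and to exploit that a jump from $x$ always maps to $xp$, one interval lower: this both produces the autonomous identity $(\star\star)$ and organises the calculation as a genuine cross-interval recursion. I would also need to check continuity of $\phi$ at the breakpoints $x=b/p^k$ and to justify differentiating $(\star)$ there (where $\phi$ has corners), which is what legitimises the piecewise integration.
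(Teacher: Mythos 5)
Your proof is correct, and it follows the paper's general methodology (Markov-property factorisation plus first-step analysis at the first jump epoch), but the decomposition you use for the recursion is genuinely different from the paper's. The paper conditions on the first jump for the process started at $b/p^k$, the left endpoint of the interval containing the target $x$: from there every jump before reaching $x$ lands at $b/p^{k-1}+pt\geq b$, so no killing term appears, and after inserting the factorisation \eqref{eq:upward5} the resulting integral equation is linear in $L_{\uparrow}(w;b,x,b)$ and rearranges to \eqref{eq:upward4} by pure algebra --- no differentiation, no regularity discussion. You instead condition on the first jump for the process started at $b$, which produces the global equation $(\star)$ with the truncated integration range $(b(1-p)/p,\,x-b)$ encoding the killed jumps, and you then localise it by differentiating in the target level and re-integrating over $(b/p^k,x]$. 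Both are valid; your $(\star)$ has the conceptual advantage of being a single equation valid for all $x>b/p$ (and $(\star\star)$ is an appealing autonomous relation making the cross-interval structure transparent), while the paper's choice of conditioning point buys a shorter argument with no smoothness issues. Note that you can get the best of both: the differentiation step, and hence the corner/regularity caveat you flag at the end, is avoidable --- simply evaluate $(\star)$ at $x$ and at $b/p^k$ and subtract; the integrals over $(b(1-p)/p,\,b/p^k-b)$ cancel, and the change of variables $T=b/p^k-b+t$ in the remaining integral gives \eqref{eq:upward4} directly, using only continuity of the integrand (which follows inductively since $L_{\uparrow}(w;b,y,b)\geq \mathrm{e}^{-(w+\lambda)(y-b)}>0$ keeps the denominator bounded away from zero).
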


\begin{proof}
First, for all $y\in[b,x)$,
	$$
	\tau_{\uparrow a}(y) \I_{\{\tau_{\uparrow a}(y) < \tau_{\downarrow b}(y)\}}
	\stackrel{d}{=}  \tau_{\uparrow x}(y) \I_{\{\tau_{\uparrow x}(y) < \tau_{\downarrow b}(y)\}} + \tau_{\uparrow a}(x) \I_{\{\tau_{\uparrow a}(x) < \tau_{\downarrow b}(x)\}},
	$$
and note that the random variables on the RHS are independent which follows straightforwardly from the Markov property.
Taking $y=b$  proves \eqref{eq:upward2}. We can therefore focus only on the computation of $L_{\uparrow}(w;b,x,b)$, for $b< x\leq a$, in which the starting position is set to $b$.\\
If $x\in (b, b/p)$, then
	$$
	L_{\uparrow}(w;b,x,b) = \E_b\left[\mathrm{e}^{-w\tau_{\uparrow x}(b)}\I_{\{\tau_{\uparrow a(b)} < \tau_{\downarrow b}(b)\}}\right] = \mathrm{e}^{-w (x-b)} \P(T_1 > x-b) = \mathrm{e}^{-(w+\lambda) (x-b)},
	$$
which proves \eqref{eq:upward3}.\\
Next, assume that  $x\in (b/p^{k}, b/p^{k+1}]$, for  $k\geq 1$. Then, from arguments similar to those used above in the proof of \eqref{eq:upward2},
	\begin{equation} \label{eq:upward5}
	L_{\uparrow}(w;b,x,b) = L_{\uparrow}(w;b,b/p^k,b) L_{\uparrow}(w;b/p^k,x,b).
	\end{equation}
Now note that
	\begin{align*}
	& \tau_{\uparrow x}(b/p^k)\I_{\{\tau_{\uparrow x}(b/p^k) < \tau_{\downarrow b}(b/p^k)\}} \\
	& \qquad \stackrel{d}{=} (x-b/p^k) \I_{\{T > x-b/p^k\}} + \left(T + \tau_{\uparrow x}(p(b/p^k+T))\I_{\{\tau_{\uparrow x}(p(b/p^k+T)) < \tau_{\downarrow b}(p(b/p^k+T))\}}\right)\I_{\{T \le x-b/p^k\}}.
	\end{align*}
Taking Laplace transforms we obtain
	$$
	L_{\uparrow}(w;b/p^k,x,b) = \mathrm{e}^{-(w+\lambda)(x-b/p^k)} + \int_0^{x-b/p^k} \lambda \mathrm{e}^{-(w+\lambda)t} L_{\uparrow}(w;b/p^{k-1}+pt,x,b)\mathrm{d}t.
	$$	
We can now apply \eqref{eq:upward5} to rewrite the above as
	$$
	\frac{L_{\uparrow}(w;b,x,b)}{L_{\uparrow}(w;b,b/p^k,b)}
	= \mathrm{e}^{-(w+\lambda) (x-b/p^k)} + \int_0^{x-b/p^k} \lambda \mathrm{e}^{-(\lambda+w ) t} \frac{L_{\uparrow}(w;b,x,b)}{L_{\uparrow}(w;b,b/p^{k-1} + pt,b)}\mathrm{d}t,
	$$
which leads to \eqref{eq:upward4}.
\end{proof}
\medskip

\begin{remark}[Relation Theorems \ref{mainthm1} and  \ref{mainthm3}]
\label{rem:two_sided_up}
{\rm
The result of Theorem \ref{mainthm1} might be retrieved from Theorem \ref{mainthm3}, when we use the fact that $\lim_{b \to 0} L_{\uparrow}(w;x,a,b) = Z_{\uparrow}(w;x,a)$ when $a$ and $x$ are fixed. In particular, note that \eqref{eq:upward4} looks very similar to, for instance, the equation just above \eqref{eq:thm1_aux3}. However, it is hard to perform this limit from the result in Theorem \ref{mainthm3}, as, when we let $b$ tend to $0$ while leaving $x$ fixed, $b/p^k$ tends to zero as well. This makes it difficult to explicitly compute the limit. Thus, while theoretically the scale function $Z_\uparrow(w;x,a)$ can be directly related to $L_{\uparrow}(w;x,a,b)$, in practice this is difficult.
}
\end{remark}

\begin{remark}[Convenient rewrite of \eqref{eq:upward4}]
\label{rem:upward4-rew}
{\rm
Note also that, if one introduces $K_{\uparrow}(w;a,x,b) = 1/L_{\uparrow}(w;a,x,b)$, then equation \eqref{eq:upward4}, rewritten for $K_{\uparrow}$, for $x\in (b/p^{k}, b/p^{k+1}]$ and all  $k\geq 1$, simplifies to
	\eqan{
	K_{\uparrow}(w;b,x,b) &= \mathrm{e}^{(w+\lambda) (x-b/p^k)} K_{\uparrow}(w;b,b/p^k,b) -  \mathrm{e}^{(w+\lambda) (x-b/p^k)}
	\int_{0}^{x-b/p^k} \frac{\lambda}{p} \mathrm{e}^{-(\lambda+w )t}K_{\uparrow}(w;b,b/p^{k-1}+pt,b)\mathrm{d}t\\
	&=\mathrm{e}^{(w+\lambda) (x-b/p^k)} K_{\uparrow}(w;b,b/p^k,b) -  \mathrm{e}^{(w+\lambda) (x-b/p^k)}
	\int_{b/p^{k-1}}^{px} \frac{\lambda}{p} \mathrm{e}^{-(\lambda+w )(s/p-b/p^k)}K_{\uparrow}(w;b,s,b)\mathrm{d}s\nn\\
	&=\mathrm{e}^{(w+\lambda) (x-b/p^k)} K_{\uparrow}(w;b,b/p^k,b) -  \mathrm{e}^{(w+\lambda)x}
	\int_{b/p^{k-1}}^{px} \frac{\lambda}{p} \mathrm{e}^{-(\lambda+w)s/p}K_{\uparrow}(w;b,s,b)\mathrm{d}s\nn.
	}
where $K_{\uparrow}(w;b,x,b) = \mathrm{e}^{(w+\lambda) (x-b)}$ for $x \in (b,b/p]$.

This also allows us to perform the iteration explicitly for more values of $x$. Indeed, for $k=1$ and thus
$x\in (b/p, b/p^{2}]$, the recursion yields
	\eqan{
	\label{K2-comp}
	K_{\uparrow}(w;b,x,b) &= \mathrm{e}^{(w+\lambda) (x-b/p)} K_{\uparrow}(w;b,b/p,b) - \mathrm{e}^{(w+\lambda)x}
	\int_{b}^{px} \frac{\lambda}{p} \mathrm{e}^{-(\lambda+w)s/p}K_{\uparrow}(w;b,s,b)\mathrm{d}s\\
	&=\mathrm{e}^{(w+\lambda)(x-b/p)}\mathrm{e}^{(w+\lambda) (b/p-b)}-\mathrm{e}^{(w+\lambda) x}
	\int_b^{xp} \frac{\lambda}{p} \mathrm{e}^{-(\lambda+w )s/p} \mathrm{e}^{(w+\lambda) (s-b)}\mathrm{d}s\nn\\
	&=\mathrm{e}^{(w+\lambda)(x-b)}-\mathrm{e}^{(w+\lambda) x}
	\int_b^{xp} \frac{\lambda}{p} \mathrm{e}^{-(w+\lambda)s (1-1/p)}\mathrm{d}s\nn\\
	&=\mathrm{e}^{(w+\lambda)(x-b)}-\mathrm{e}^{(w+\lambda)x}
	\frac{\lambda}{(1-p)(w+\lambda)}\big[\mathrm{e}^{-(w+\lambda)b(1-1/p)}-\mathrm{e}^{-(w+\lambda)x (1-p)}\big].\nn
	}
We extend this one iteration further, to obtain, for $k=2$ and thus $x\in (b/p^{2}, b/p^{3}]$,
	\eqan{
	K_{\uparrow}(w;b,x,b) &= \mathrm{e}^{(w+\lambda) (x-b/p^2)} K_{\uparrow}(w;b,b/p^2,b) -  \mathrm{e}^{(w+\lambda) x}
	\int_{b/p}^{px} \lambda \mathrm{e}^{-(\lambda+w ) s/p}K_{\uparrow}(w;b,s,b)\mathrm{d}s.
	}
After this, we can substitute \eqref{K2-comp} to compute $K_{\uparrow}(w;b,x,b)$ for $x\in (b/p^{2}, b/p^{3}]$. By iteration, it is easily seen that there exist coefficients $a_{n,k}=a_{n,k}(p,w,\lambda)$ such that, for $x\in (b/p^{k}, b/p^{k+1}]$ and all $k\geq 1$,
	\eqn{
	\label{Kuparrow-repr}
	K_{\uparrow}(w;b,x,b)=\sum_{n=0}^k a_{n,k} \mathrm{e}^{(w+\lambda)p^n x}.
	}
In turn, this representation looks similar to \eqref{zdownarrowformula}, but here we have no exact formula for the coefficients $a_{n,k}$, whereas in \eqref{zdownarrowformula} we do.
}
\end{remark}

\begin{remark}[Continuity and differentiability properties of $a\mapsto L_{\uparrow}(w;x,a,b)$]
\label{rem:con-diff-Luparrow}
{\rm The above formulas are also convenient to study the continuity and differentiability properties of $x\mapsto L_{\uparrow}(w;b,x,b)$. Indeed, by $K_{\uparrow}(w;x,a,b) = 1/L_{\uparrow}(w;x,a,b)$ and \eqref{Kuparrow-repr}, $x\mapsto L_{\uparrow}(w;b,x,b)$ is continuous everywhere, while it is continuously differentiable in every point except possibly in $x=b/p^k$ for all $k\geq 0$. In this countable number of points, however, left- and right-derivatives do exist. Through \eqref{eq:upward2}, this can be extended to continuity and almost everywhere differentiability of $a\mapsto L_{\uparrow}(w;x,a,b)$ for general $x$ and $b$.
}
\end{remark}

\begin{thm}[Downward two-sided exit problem]
\label{mainthm4}
For $x\in [b,a)$, 
the Laplace-Stieltjes transform
	\begin{equation}
	\label{eq:downward1}
	L_{\downarrow}(w;x,a,b)=\E_x\left[\mathrm{e}^{-w\tau_{\downarrow b}(x)}\I_{\{\tau_{\uparrow a}(x)>\tau_{\downarrow b}(x) \}}\right]
	\end{equation}
of the downward two-sided exit time $\tau_{\downarrow b}(x)$ of the additive-increase and multiplicative-decrease process $\left ( X_t\right)_{t\geq 0}$ equals
	\begin{equation}
	\label{downone0}
	L_{\downarrow}(w;x,a,b)
	= Z_\downarrow(w;x,b)-L_{\uparrow}(w;x,a,b)Z_\downarrow(w;a,b).
	\end{equation}
\end{thm}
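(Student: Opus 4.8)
The plan is to derive the identity \eqref{downone0} by decomposing the one-sided downward transform $Z_\downarrow(w;x,b)$ of Theorem \ref{mainthm2} according to whether the process first leaves the interval $(b,a)$ through the upper level $a$ or through the lower level $b$, and then to re-express the ``upper-first'' contribution via the strong Markov property.

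First I would observe that, for $x\in[b,a)$, the two stopping times $\tau_{\uparrow a}(x)$ and $\tau_{\downarrow b}(x)$ cannot coincide at a finite time: the process creeps continuously through $a$ but can only cross $b$ by a downward jump, so a simultaneous up- and down-crossing is a null event; on the remaining overlap $\{\tau_{\uparrow a}(x)=\tau_{\downarrow b}(x)=\infty\}$ the factor $\mathrm{e}^{-w\tau_{\downarrow b}(x)}$ vanishes since $\mathrm{Re}[w]>0$. Hence the one-sided transform of Theorem \ref{mainthm2} splits as
\begin{align*}
Z_\downarrow(w;x,b)
&=\E_x[\mathrm{e}^{-w\tau_{\downarrow b}(x)}\I_{\{\tau_{\uparrow a}(x)>\tau_{\downarrow b}(x)\}}]
+\E_x[\mathrm{e}^{-w\tau_{\downarrow b}(x)}\I_{\{\tau_{\uparrow a}(x)<\tau_{\downarrow b}(x)\}}]\\
&=L_{\downarrow}(w;x,a,b)+\E_x[\mathrm{e}^{-w\tau_{\downarrow b}(x)}\I_{\{\tau_{\uparrow a}(x)<\tau_{\downarrow b}(x)\}}],
\end{align*}
the first term being precisely \eqref{eq:downward1}.

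The crux is then to identify the second term. On the event $\{\tau_{\uparrow a}(x)<\tau_{\downarrow b}(x)\}$ the process reaches level $a$ before crossing $b$, and because it creeps upward we have $X_{\tau_{\uparrow a}(x)}=a$ exactly (the same skip-free-upward property used in the proof of Theorem \ref{mainthm1}). Thus, on this event, $\tau_{\downarrow b}(x)=\tau_{\uparrow a}(x)+\tau_{\downarrow b}(a)'$, where $\tau_{\downarrow b}(a)'$ is the first downward crossing time of $b$ for the process restarted from $a$ at time $\tau_{\uparrow a}(x)$, with no upper barrier in force any longer. Applying the strong Markov property at the stopping time $\tau_{\uparrow a}(x)$, and using that the post-$\tau_{\uparrow a}(x)$ evolution started from the deterministic point $a$ is independent of $\mathcal{F}_{\tau_{\uparrow a}(x)}$ with the law of $X$ started at $a$, I would factor
\begin{align*}
\E_x[\mathrm{e}^{-w\tau_{\downarrow b}(x)}\I_{\{\tau_{\uparrow a}(x)<\tau_{\downarrow b}(x)\}}]
&=\E_x[\mathrm{e}^{-w\tau_{\uparrow a}(x)}\I_{\{\tau_{\uparrow a}(x)<\tau_{\downarrow b}(x)\}}]\,\E_a[\mathrm{e}^{-w\tau_{\downarrow b}(a)}]\\
&=L_{\uparrow}(w;x,a,b)\,Z_\downarrow(w;a,b),
\end{align*}
recognising the first factor as $L_{\uparrow}$ from \eqref{eq:upward1} and the second as the one-sided downward transform of Theorem \ref{mainthm2}. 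Substituting back and rearranging yields \eqref{downone0}.

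The main obstacle I expect is making this strong Markov factorisation fully rigorous. One must justify that $X_{\tau_{\uparrow a}(x)}=a$ deterministically (the creeping property), that $\{\tau_{\uparrow a}(x)<\tau_{\downarrow b}(x)\}\in\mathcal{F}_{\tau_{\uparrow a}(x)}$ so that the indicator may be absorbed into the first factor, and that the down-crossing of the restarted process genuinely ignores the level $a$, so that the correct second factor is the one-sided $Z_\downarrow(w;a,b)$ rather than any two-sided analogue. Once these measurability and independence points are settled, the identity is immediate with no further computation.
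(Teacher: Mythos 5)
Your proof is correct and takes essentially the same route as the paper's: the paper likewise uses the creeping property and the strong Markov property to write $\tau_{\downarrow b}(x)\I_{\{\tau_{\uparrow a}(x)<\tau_{\downarrow b}(x)\}} \stackrel{d}{=} \tau_{\uparrow a}(x)\I_{\{\tau_{\uparrow a}(x)<\tau_{\downarrow b}(x)\}}+\tau_{\downarrow b}(a)$ with independent summands, factors the corresponding expectation as $L_{\uparrow}(w;x,a,b)\,Z_\downarrow(w;a,b)$, and then splits $Z_\downarrow(w;x,b)$ over the two ordering events to conclude. Your extra care about the null coincidence event and the measurability of $\{\tau_{\uparrow a}(x)<\tau_{\downarrow b}(x)\}$ in $\mathcal{F}_{\tau_{\uparrow a}(x)}$ just makes explicit what the paper leaves implicit.
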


\begin{proof}
Clearly, for all $x\in [b,a)$,
	\begin{align*}
 	\tau_{\downarrow b}(x)\I_{\{\tau_{\uparrow a}(x) < \tau_{\downarrow b}(x)\}} &\stackrel{d}{=} \tau_{\uparrow a}(x)\I_{\{\tau_{\uparrow a}(x) < \tau_{\downarrow b}(x)\}} +\tau_{\downarrow b}(a),
 	\end{align*}
and note again that the random variables on the RHS are independent. Thus,
	\begin{align*}
	\E_x\left[\mathrm{e}^{-w\tau_{\downarrow b}(x)}\I_{\{\tau_{\uparrow a}(x) < \tau_{\downarrow b}(x)\}}
 	\right]&=\E_x\left[\mathrm{e}^{-w\tau_{\uparrow a}(x)}\I_{\{\tau_{\uparrow a}(x) < \tau_{\downarrow b}(x)\}} \right]\E_x\left[\mathrm{e}^{-w\tau_{\downarrow b}(a)} \right].
 	\end{align*}
Noting that $\E_x\left[\mathrm{e}^{-w\tau_{\downarrow b}(x)}\right]=\E_x\left[\mathrm{e}^{-w\tau_{\downarrow b}(x)}\I_{\{\tau_{\uparrow a}(x) < \tau_{\downarrow b}(x)\}}\right]+\E_x\left[\mathrm{e}^{-w\tau_{\downarrow b}(x)}\I_{\{\tau_{\uparrow a}(x) > \tau_{\downarrow b}(x)\}}\right]$ and using Theorems \ref{mainthm2} and \ref{mainthm3} completes the proof of the theorem.
\end{proof}

\begin{remark}[Relation Theorems \ref{mainthm2} and  \ref{mainthm4} and number of scale functions]
\label{rem:two_sided_down}
{\rm
Note that $\tau_{\downarrow b}(a) \ge T_1 + \cdots + T_{\log_{1/p} (a/b)}$ for independent $T_1, T_2,\ldots\sim {\sf Exp}(\lambda)$ and hence $\tau_{\downarrow b}(a) \to \infty$ a.s. as $a \to \infty$ while $b$ remains fixed. As $L_{\uparrow}(w;x,a,b) \le 1$ for any set of parameters, if we let $a \to \infty$ with a fixed $b$ in \eqref{downone0}, we obtain that the second term vanishes, so that
	$$
	\lim_{a \to \infty} L_{\downarrow}(w;x,a,b)=\lim_{a \to \infty}\E_x\left[\mathrm{e}^{-w\tau_{\downarrow b}(x)}\I_{\{\tau_{\uparrow a}(x)>\tau_{\downarrow b}(x) \}}\right] = Z_\downarrow(w;x,b),
	$$
as can be expected from Theorem \ref{mainthm2}. Thus, the scale function $Z_\downarrow(w;x,b)$ can be directly related to $L_{\uparrow}(w;x,a,b)$. Since \eqref{downone0} in Theorem \ref{mainthm4} also identifies $L_{\downarrow}(w;x,a,b)$ in terms of $L_{\uparrow}$ and $Z_\downarrow$, we conclude that in total we need the {\em two} scale functions $Z_{\uparrow}$ and $L_{\uparrow}$, rather than four.
}
\end{remark}

\section{Exit identities for reflected processes}
\label{sec:main2}

We now consider two types of reflected versions of the process $X$:
\medskip

\noindent
$\rhd$ For the first, we reflect at $a$, i.e., the stochastic process grows linearly over time until it reaches $a$, then it stays there until the next jump occurs, and at the jump time, the process jumps from $a$ to $ap$. Let us denote this process by $X^{\overline{a}}=(X_t^{\overline{a}})_{t\geq0}$.
\medskip

\noindent
$\rhd$ For the second, we reflect at $b$, i.e., the process grows linearly and whenever due to a jump the process jumps over the downward level $b$ it is put back to $b$ and it continues its evolution in time according to the background process $X$ from level $b$. Let us denote this process by $X^{\underline{b}}=(X_t^{\underline{b}})_{t\geq0}$.
\medskip

In this section, we will analyze the first passage times of these two reflected processes defined as
	\eqn{
	\tau_{\downarrow c}^{\overline{a}}(x)=\inf\{t\geq 0\colon X_t^{\overline{a}}< c\mid X_0=x\},\qquad \tau_{\uparrow c}^{\underline{b}}(x)=\inf\{t\geq 0\colon X_t^{\underline{b}}> c\mid X_0=x\}.
	}

Finally, in this paper we identify the Laplace transforms of the first passage time
	\eqn{
	\label{tau-c-def}
	\tau_c(x)=\inf\{t\geq 0\colon Y_t> c\mid X_0=x\}
	}
of the process $Y_t=\overline{X}_t-X_t$ reflected at a running supremum $\overline{X}_t=\sup_{s\leq t} X_s \vee \overline{X}_0$, as well as the first passage time
	\eqn{
	\label{widehat-tau-def}
	\widehat{\tau}_c(x)=\inf\{t\geq 0 \colon \widehat{Y}_t> c\mid X_0=x\}
	}
of the process $\widehat{Y}_t=X_t-\underline{X}_t$ reflected at its running infimum $\underline{X}_t=\inf_{s\leq t} X_s \wedge \underline{X}_0$.

Note that the process $Y$ stays at $0$ until the first jump epoch $T$ of $X$. Then, right after the first jump it equals $Y_T=(1-p)X_{T-}$, and hence the jump of $Y$ is positive ($\Delta Y_T= Y_T>0$). Later, $t\mapsto Y_t$ decreases until the next jump.

The process $\widehat{Y}_t$ evolves in a different way. At the beginning, it equals $t-x$ until the first jump. Then, if at the epoch $T$ of the first jump of $X$ we have $X_T\geq x$, then the process $\widehat{Y}$ evolves without any changes (except a shift by the initial position $x$). If $X_T<x,$ instead, then $\widehat{Y}_T =0$. In this case, our new initial position equals $X_T$ and the process $\widehat{Y}$ evolves as before.



We will now present results concerning the exit problems for the reflected processes:

\begin{thm}[First passage time for the reflected process at the upper level]
\label{mainthm5}
For  $x\in [c,a)$, 
the Laplace-Stieltjes transform of the first passage time $\tau_{\downarrow c}^{\bar a}(x)$ for the reflected process at the upper level is given by
	\begin{equation}
	\label{refldown}
	\E_x[\mathrm{e}^{-w\tau_{\downarrow c}^{\bar a}(x)}]=L_{\downarrow}(w;x,a,c)
	+L_{\uparrow}(w;x,a,c)\frac{\lambda}{w+\lambda } \left(\frac{L_{\downarrow}(w;pa,a,b)(w+\lambda)}{w+\lambda-L_{\uparrow}(w;pa,a,b)	\lambda}\mathbb{1}_{\{pa>c\}}+\mathbb{1}_{\{pa\leq c\}}\right).
	\end{equation}
\end{thm}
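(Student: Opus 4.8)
The plan is to exploit the fact that the reflected process $X^{\overline a}$ coincides pathwise with the free process $X$ up until the first time the latter reaches the barrier $a$, since $X$ creeps upward. First I would decompose the exit according to whether the lower level $c$ is crossed before or after the process first touches $a$. On the event $\{\tau_{\downarrow c}(x)<\tau_{\uparrow a}(x)\}$ the two processes are identical up to $\tau_{\downarrow c}(x)$, so $\tau^{\overline a}_{\downarrow c}(x)=\tau_{\downarrow c}(x)$; on the complementary event the reflected process reaches $a$ at time $\tau_{\uparrow a}(x)$, and by the strong Markov property applied at the hitting time of $a$ the remaining time is an independent copy of $\tau^{\overline a}_{\downarrow c}(a)$. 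Writing $g:=\E_a[\mathrm{e}^{-w\tau^{\overline a}_{\downarrow c}(a)}]$ and recognising the two resulting expectations as the two-sided exit transforms of Theorems \ref{mainthm4} and \ref{mainthm3}, this yields
\[
\E_x[\mathrm{e}^{-w\tau^{\overline a}_{\downarrow c}(x)}]=L_{\downarrow}(w;x,a,c)+L_{\uparrow}(w;x,a,c)\,g,
\]
reducing the whole problem to the evaluation of the single boundary quantity $g$.

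Next I would compute $g$ by a first-step analysis at the barrier. Started from $a$, the process sits at $a$ for an $\mathsf{Exp}(\lambda)$-distributed time $T$ and then jumps to $ap$, contributing the factor $\E_a[\mathrm{e}^{-wT}]=\lambda/(w+\lambda)$. If $pa\le c$, the jump already takes the process below $c$, so $\tau^{\overline a}_{\downarrow c}(a)=T$ and $g=\lambda/(w+\lambda)$, which is exactly the $\I_{\{pa\le c\}}$ contribution. If instead $pa>c$, then $ap\in(c,a)$ and after the jump the process restarts from $ap$; applying the displayed reduction above at the point $ap$ gives $\E_{ap}[\mathrm{e}^{-w\tau^{\overline a}_{\downarrow c}(ap)}]=L_{\downarrow}(w;ap,a,c)+L_{\uparrow}(w;ap,a,c)\,g$. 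This produces the self-consistent (regenerative) equation
\[
g=\frac{\lambda}{w+\lambda}\Big(L_{\downarrow}(w;ap,a,c)+L_{\uparrow}(w;ap,a,c)\,g\Big),
\]
whose solution is $g=\lambda L_{\downarrow}(w;ap,a,c)\big/\big(w+\lambda-\lambda L_{\uparrow}(w;ap,a,c)\big)$. Substituting the two cases of $g$ back into the reduction reproduces \eqref{refldown}, with the lower level $c$ playing the role of the $b$ appearing there.

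The self-consistency in the case $pa>c$ is the conceptual heart of the argument: it encodes that, after reaching $a$ and jumping to $ap$, the reflected process may climb back to $a$ (with weight $L_{\uparrow}(w;ap,a,c)$) and regenerate arbitrarily many times before finally crossing $c$ (with weight $L_{\downarrow}(w;ap,a,c)$), so that $g$ satisfies a geometric-type fixed-point relation whose resolvent is the denominator $w+\lambda-\lambda L_{\uparrow}(w;ap,a,c)$. The points requiring care are therefore: (i) justifying the strong Markov decomposition both at the hitting time of $a$ and at the first jump from $a$, which is routine given the right-continuous filtration; (ii) checking that $\big|\tfrac{\lambda}{w+\lambda}L_{\uparrow}(w;ap,a,c)\big|<1$ for $\mathrm{Re}[w]>0$, so that the fixed-point equation has a unique bounded solution and the underlying regeneration series converges; and (iii) handling the degenerate boundary case $pa=c$. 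I expect none of these to constitute a genuine obstacle, and the only real work is the immediate algebraic rearrangement of the fixed-point equation.
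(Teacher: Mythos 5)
Your proposal is correct and follows essentially the same route as the paper: decompose at the first passage to $a$ versus below $c$ via the strong Markov property, recognise the two terms as $L_{\downarrow}$ and $L_{\uparrow}$, and solve a linear fixed-point equation for the boundary quantity arising from the exponential sojourn at $a$ followed by the jump to $pa$. The only cosmetic difference is that you take the unknown to be $g=\E_a[\mathrm{e}^{-w\tau^{\overline a}_{\downarrow c}(a)}]$ while the paper works with $\E_{pa}[\mathrm{e}^{-w\tau^{\overline a}_{\downarrow c}(pa)}]=\tfrac{w+\lambda}{\lambda}\,g$, which yields the identical formula after rearrangement.
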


\begin{proof} To prove \eqref{refldown} note that, by the Markov property, for $a>c/p$,
	\eqn{
	\label{refldown-a}
	\E_x[\mathrm{e}^{-w\tau_{\downarrow c}^{\bar a} (x)}]=\E_x[\mathrm{e}^{-w\tau_{\downarrow c}(x)}\mathbb{1}_{\{\tau_{\downarrow c}(x)<\tau_{\uparrow a}(x)\}}]
	+\E_x[\mathrm{e}^{-w\tau_{\uparrow a}(x)}\mathbb{1}_{\{\tau_{\uparrow a}(x)<\tau_{\downarrow c}(x)\}}]\frac{\lambda}{w+\lambda}\E_{pa}[\mathrm{e}^{-w\tau_{\downarrow c}^{\bar a}(pa)}].
	}
Using that \eqref{eq:downward1} implies that
	\[
	\E_x[\mathrm{e}^{-w\tau_{\downarrow c}(x)}\mathbb{1}_{\{\tau_{\downarrow c}(x)<\tau_{\uparrow a}(x)\}}]=L_{\downarrow}(w;x,a,c),
	\]
the first term in \eqref{refldown-a} is equal to the first part of \eqref{refldown}.

To investigate the second term in \eqref{refldown}, we start by noting that, by \eqref{eq:upward1},
	\eqn{
	\E_x[\mathrm{e}^{-w\tau_{\uparrow a}(x)}\mathbb{1}_{\{\tau_{\uparrow a}(x)<\tau_{\downarrow c}(x)\}}]=L_{\uparrow}(w;x,a,c),
	}
so that \eqref{refldown-a} becomes
	\eqn{
	\label{refldown-b}
	\E_x[\mathrm{e}^{-w\tau_{\downarrow c}^{\bar a} (x)}]=L_{\downarrow}(w;x,a,c)
	+L_{\uparrow}(w;x,a,c)\frac{\lambda}{w+\lambda}\E_{pa}[\mathrm{e}^{-w\tau_{\downarrow c}^{\bar a}(pa)}].
	}
Taking $x=pa$, we obtain a linear equation for $\E_{pa}[\mathrm{e}^{-w\tau_{\downarrow c}^{\bar a}(pa)}]$ that can be solved as
	\eqn{
	\label{refldown-b}
	\E_{pa}[\mathrm{e}^{-w\tau_{\downarrow c}^{\bar a}(pa)}]=\frac{L_{\downarrow}(w;x,a,c)}{1-L_{\uparrow}(w;x,a,c)\frac{\lambda}{w+\lambda}}.
	}
This proves \eqref{refldown} for $a>c/p$.

For $a\leq c/p$, instead, we note that $\E_{pa}[\mathrm{e}^{-w\tau_{\downarrow c}^{\bar a}(pa)}]=1$, so that now \eqref{refldown-a} becomes
	\eqn{
	\E_x[\mathrm{e}^{-w\tau_{\downarrow c}^{\bar a}(x)}]
	=\E_x[\mathrm{e}^{-w\tau_{\downarrow c}(x)}\mathbb{1}_{\{\tau_{\downarrow c}(x)<\tau_{\uparrow a}(x)\}}]
	+\E_x[\mathrm{e}^{-w\tau_{\uparrow a}(x)}\mathbb{1}_{\{\tau_{\uparrow a}(x)<\tau_{\downarrow c}(x)\}}]\frac{\lambda}{w+\lambda}.
	}
Again using how the expectations can be translated into $L_{\downarrow}(w;x,a,c)$ and $L_{\uparrow}(w;x,a,c)$, this completes the proof.
\end{proof}

\begin{thm}[First passage time for the reflected process at the lower level]
\label{mainthm5b}
For  $x\in [b,c)$,
the Laplace-Stieltjes transform of the first passage time $\tau_{\uparrow c}^{\underline{b}}(x)$ for the reflected process at the lower level is given by
	\begin{equation}
	\label{reflup}
	\E_x[\mathrm{e}^{-w\tau_{\uparrow c}^{\underline{b}}(x)}]=L_{\uparrow}(w;x,b,c)+L_{\downarrow}(w;x,b,c)\frac{L_{\uparrow}(w;b,b,c)}{1-L_{\downarrow}(w;b,b,c)}.
	\end{equation}
\end{thm}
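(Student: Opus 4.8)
The plan is to mimic the first-step / strong-Markov decomposition used in the proof of Theorem \ref{mainthm5}, adapted to the lower-reflection mechanism. Write $G(x):=\E_x[\mathrm{e}^{-w\tau_{\uparrow c}^{\underline{b}}(x)}]$ for $x\in[b,c)$. Starting from $x$, I would run the unreflected process $X$ up to the first time it either creeps above $c$ or jumps strictly below $b$; these are exactly the two competing events $\{\tau_{\uparrow c}(x)<\tau_{\downarrow b}(x)\}$ and $\{\tau_{\downarrow b}(x)<\tau_{\uparrow c}(x)\}$ governing the two-sided exit of $X$ from $(b,c)$. On the first event no reflection has occurred yet, so the reflected process coincides with $X$ and $\tau_{\uparrow c}^{\underline{b}}(x)=\tau_{\uparrow c}(x)$; by \eqref{eq:upward1} this contributes the term $L_{\uparrow}(w;x,b,c)$. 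On the second event the process jumps below $b$ and is instantaneously reset to $b$; by the strong Markov property the post-reset evolution is a fresh copy of the reflected process started at $b$, so by \eqref{eq:downward1} this event contributes $L_{\downarrow}(w;x,b,c)\,G(b)$.

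This yields the renewal-type identity
\begin{equation}
G(x)=L_{\uparrow}(w;x,b,c)+L_{\downarrow}(w;x,b,c)\,G(b),\qquad x\in[b,c).
\end{equation}
To close the recursion I would specialise to $x=b$. Since the process has positive drift, from the lower barrier it creeps upward immediately and a downward crossing can occur only through a jump, so both $\tau_{\uparrow c}(b)$ and $\tau_{\downarrow b}(b)$ are strictly positive and the quantities $L_{\uparrow}(w;b,b,c)$ and $L_{\downarrow}(w;b,b,c)$ are well defined via Theorems \ref{mainthm3} and \ref{mainthm4}. Setting $x=b$ gives $G(b)=L_{\uparrow}(w;b,b,c)+L_{\downarrow}(w;b,b,c)\,G(b)$, whence
\begin{equation}
G(b)=\frac{L_{\uparrow}(w;b,b,c)}{1-L_{\downarrow}(w;b,b,c)}.
\end{equation}
Substituting this back into the identity for general $x$ produces exactly \eqref{reflup}.

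The solvability of the last display requires $L_{\downarrow}(w;b,b,c)\neq 1$, which I would justify by noting that $L_{\downarrow}(w;b,b,c)$ is the transform of a strictly positive exit time restricted to a sub-event, so that $|L_{\downarrow}(w;b,b,c)|<1$ for $\mathrm{Re}[w]>0$; equivalently, the factor $1/(1-L_{\downarrow})$ is the geometric resummation over the successive reflections at $b$, each of which costs at least one exponential inter-jump time, so the series converges. The main obstacle I anticipate is the careful justification of the strong Markov application at the reflection epoch: one must verify that the reset-to-$b$ mechanism genuinely restarts the process from the deterministic level $b$ with no residual memory, so that the after-reflection transform is precisely $G(b)$ rather than a position- or time-dependent quantity. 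Unlike the upper-reflection case in Theorem \ref{mainthm5}, no case split is needed here, since the reset always lands at $b<c$. Once the strong Markov step is secured, the identification of the two contributions with $L_{\uparrow}$ and $L_{\downarrow}$ is immediate and the remaining algebra is routine.
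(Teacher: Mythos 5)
Your proposal is correct and takes essentially the same route as the paper's proof: a strong-Markov decomposition of the reflected process at the first exit of the unreflected $X$ from $(b,c)$, yielding the renewal identity $\E_x[\mathrm{e}^{-w\tau_{\uparrow c}^{\underline{b}}(x)}]=L_{\uparrow}(w;x,b,c)+L_{\downarrow}(w;x,b,c)\,\E_b[\mathrm{e}^{-w\tau_{\uparrow c}^{\underline{b}}(b)}]$, followed by setting $x=b$ to solve the resulting linear equation and substituting back. The additional points you flag (the solvability condition $L_{\downarrow}(w;b,b,c)\neq 1$ and the justification of the reset-to-$b$ strong Markov step) are refinements that the paper leaves implicit rather than a different argument.
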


\begin{proof}
The proof of the theorem is similar to the proof of Theorem \ref{mainthm5}, and our exposition is brief. Indeed, starting from $x$ either
we go to level $c$ before visiting $b$ or the other way around. In the latter case we start from level $b$.
Hence, \eqref{refldown-a} now becomes
	\eqn{
	\label{reflup-b}
	\E_x[\mathrm{e}^{-w\tau_{\uparrow c}^{\underline{b}}(x)}]=\E_x[\mathrm{e}^{-w\tau_{\uparrow c}(x)}\mathbb{1}_{\{\tau_{\uparrow c}(x)<\tau_{\downarrow b}(x)\}}]
	+\E_x[\mathrm{e}^{-w\tau_{\downarrow b}(x)}\mathbb{1}_{\{\tau_{\downarrow b}(x)<\tau_{\uparrow c}(x)\}}]\E_b[\mathrm{e}^{-w\tau_{\uparrow c}^{\underline{b}}(b)}],
	}
which in turn can be written as
	\eqn{
	\label{reflup-b}
	\E_x[\mathrm{e}^{-w\tau_{\uparrow c}^{\underline{b}}(x)}]=L_{\uparrow}(w;x,b,c)+L_{\downarrow}(w;x,b,c)\E_b[\mathrm{e}^{-w\tau_{\uparrow c}^{\underline{b}}(b)}].
	}
Now taking $x=b$ in the above and using it to calculate $\E_b[\mathrm{e}^{-w\tau_{\uparrow c}^{\underline{b}}(b)}]$ completes the proof.
\end{proof}
\medskip

The statements for the process $X$ reflected at the running supremum and infimum are much more complex. They are much more important though as they describe behaviour of so-called drawdown and drawup
processes $Y$ and $\widehat{Y}$. We start with the exit times when the process is reflected at the supremum. In the statement, we write $\partial L_{\uparrow}(w;z,y,u)=(\partial_+ L_{\uparrow}(w;z,v,u)/\partial v)_{v=y}$ for partial right derivative $\partial_+$
and similarly for $L_{\downarrow}$, where these derivatives exist due to Remark \ref{rem:con-diff-Luparrow}, identity \eqref{downone0}
and definition of the function $Z_\downarrow(w;x,b)$ given in \eqref{zdownarrowformula} and are continuous except
countable many points:

\begin{thm}[First passage time for the reflected process at running supremum]
\label{mainthm6}
Assume that $\overline{X}_0=X_0=x$. Then,
the Laplace-Stieltjes transform of the exit time $\tau_c(x)$ of the process $Y$ that is the reflected version of $X$ reflected at the running supremum, equals
	\begin{align}
	\E_x[\mathrm{e}^{-w \tau_c(x)}]
	\label{reflsupremum}
	&=\int_x^{+\infty}\frac{\partial L_{\uparrow}(0;w,w,w-c)}{L_{\uparrow}(0;w,w,w-c)}
	\exp\left\{-\int_x^w \frac{\partial L_{\uparrow}(w;z,z,z-c)}{L_{\uparrow}(w;z,z,z-c)}
	\;\mathrm{d}z\right\}
	\frac{ \partial L_{\downarrow}(w;w,w,w-c)}{\partial L_{\downarrow}(0;w,w,w-c)}
	\; \mathrm{d}w.
	\end{align}
\end{thm}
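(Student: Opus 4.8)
The plan is to run a Lehoczky-type level-crossing analysis indexed by the running supremum $\overline{X}$, feeding in the two-sided exit transforms $L_{\uparrow}$ and $L_{\downarrow}$ from Theorems \ref{mainthm3} and \ref{mainthm4}. The starting observation is that, since $X$ creeps upward and only jumps down, the drawdown $Y=\overline{X}-X$ increases solely at jump epochs, so $\tau_c$ is attained at a jump and the supremum level $U:=\overline{X}_{\tau_c}$ is well defined. I would therefore compute $\E_x[\mathrm{e}^{-w\tau_c}]$ by conditioning on $U$, writing it as an integral over the supremum level $u$ of (the discounted probability of raising the supremum to $u$ while keeping $Y<c$) times (the discounted density of first exceeding the drawdown level $c$ while the supremum sits at $u$).

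First I would establish the multiplicative structure of the discounted survival $G(u):=\E_x[\mathrm{e}^{-wT_u}\I_{\{\tau_c>T_u\}}]$, where $T_u=\inf\{t\colon \overline{X}_t\ge u\}$ and $G(x)=1$. Applying the strong Markov property at $T_u$, where $X=\overline{X}=u$ and $Y=0$, raising the supremum from $u$ to $u+\mathrm{d}u$ without the drawdown reaching $c$ is exactly a two-sided upward exit of $X$ started at $u$ with upper barrier $u+\mathrm{d}u$ and lower barrier $u-c$, so that $G(u+\mathrm{d}u)=G(u)\,L_{\uparrow}(w;u,u+\mathrm{d}u,u-c)$. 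Using the degeneracy $L_{\uparrow}(w;u,u,u-c)=1$ (since $\tau_{\uparrow u}(u)=0$) and the right-differentiability of $a\mapsto L_{\uparrow}(w;x,a,b)$ supplied by Remark \ref{rem:con-diff-Luparrow}, taking logarithms and letting $\mathrm{d}u\to0$ yields the linear relation $(\log G)'(u)=\partial L_{\uparrow}(w;u,u,u-c)$, whence $G(u)=\exp\!\big(\int_x^u\partial L_{\uparrow}(w;z,z,z-c)\,\mathrm{d}z\big)$. This is precisely the exponential factor in \eqref{reflsupremum}.

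Next I would identify the exit density. On the same infinitesimal step the complementary event is that the drawdown reaches $c$ before the supremum advances to $u+\mathrm{d}u$, which is the two-sided downward exit and contributes $G(u)\,L_{\downarrow}(w;u,u+\mathrm{d}u,u-c)$ to $\E_x[\mathrm{e}^{-w\tau_c}\I_{\{U\in\mathrm{d}u\}}]$. Since $L_{\downarrow}(w;u,u,u-c)=0$, this equals $G(u)\,\partial L_{\downarrow}(w;u,u,u-c)\,\mathrm{d}u+o(\mathrm{d}u)$, and summing over the supremum level by the law of total probability gives
\begin{equation*}
\E_x[\mathrm{e}^{-w\tau_c}]=\int_x^\infty \exp\!\Big(\int_x^u\partial L_{\uparrow}(w;z,z,z-c)\,\mathrm{d}z\Big)\,\partial L_{\downarrow}(w;u,u,u-c)\,\mathrm{d}u.
\end{equation*}
To bring this to the exact shape \eqref{reflsupremum} I would rewrite the density $\partial L_{\downarrow}(w;u,u,u-c)$ using $L_{\uparrow}(0;u,u,u-c)=1$ together with the complementarity $\partial L_{\uparrow}(0;u,u,u-c)=-\partial L_{\downarrow}(0;u,u,u-c)$, which follows by differentiating $L_{\uparrow}(0;\cdot)+L_{\downarrow}(0;\cdot)=1$ (the two-sided exit is a.s.\ finite, so at $w=0$ the transforms sum to one). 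This accounts for the $w=0$ factors $\partial L_{\uparrow}(0;\cdot)/L_{\uparrow}(0;\cdot)$ and $\partial L_{\downarrow}(w;\cdot)/\partial L_{\downarrow}(0;\cdot)$ appearing in the statement, the signs being fixed by the same complementarity relation and \eqref{downone0}.

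I expect the main obstacle to be the rigorous justification of the infinitesimal step rather than the algebra. Two points need care. First, on $[T_u,T_{u+\mathrm{d}u}]$ the drawdown is measured against the instantaneous supremum $\overline{X}_t\in[u,u+\mathrm{d}u]$, whereas $L_{\uparrow}$ and $L_{\downarrow}$ use the fixed lower barrier $u-c$; the discrepancy is a drawdown band of width $\mathrm{d}u$, which contributes only $o(\mathrm{d}u)$ to both the survival and the exit, so the strong-Markov product and hence the limiting relations hold to first order, but this estimate must be made quantitative. Second, passing from the difference relation to the differential one requires the right-differentiability of $a\mapsto L_{\uparrow}(w;x,a,b)$ and $a\mapsto L_{\downarrow}(w;x,a,b)$ away from the countable exceptional set, which is exactly what Remark \ref{rem:con-diff-Luparrow} and \eqref{downone0} provide. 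Finally I would verify convergence and the boundary behaviour: as $u\to\infty$ a single jump from level $u$ already forces $Y=(1-p)u>c$, so $\partial L_{\uparrow}(w;u,u,u-c)\to-(w+\lambda)$ and $\partial L_{\downarrow}(w;u,u,u-c)\to\lambda$, whence $G$ decays exponentially, the integral converges, and $\E_u[\mathrm{e}^{-w\tau_c}]\to \lambda/(w+\lambda)$, consistently with the representation.
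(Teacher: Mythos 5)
Your proposal is correct and is essentially the paper's own argument: a Lehoczky-type pass over the supremum levels in which the strong Markov property turns survival into a product of the factors $L_{\uparrow}(w;u,u+\mathrm{d}u,u-c)$ and the exit contribution into $\partial L_{\downarrow}(w;u,u,u-c)\,\mathrm{d}u$ --- the paper merely organizes this as (law of $\overline{X}_{\tau_c(x)}$, computed from the $w=0$ transforms) times (conditional transform given $\overline{X}_{\tau_c(x)}=u$, a product of ratios $L_{\uparrow}(w;\cdot)/L_{\uparrow}(0;\cdot)$), and the $w=0$ factors cancel to give exactly your discounted-survival form, with the complementarity $\partial L_{\uparrow}(0;u,u,u-c)=-\partial L_{\downarrow}(0;u,u,u-c)$ being precisely what converts one presentation into the other. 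The only discrepancies are matters of bookkeeping, not gaps: with $\partial$ the right derivative defined before Theorem \ref{mainthm6} one has $\partial L_{\uparrow}\leq 0$, so your sign choices (exponent $\int_x^u \partial L_{\uparrow}(w;z,z,z-c)\,\mathrm{d}z$ with no minus, density $+\partial L_{\downarrow}$) are the internally consistent ones and match \eqref{reflsupremum} once the paper's $\partial$ symbols are read with the opposite sign convention, and your $o(\mathrm{d}u)$ estimate for the drawdown-band correction --- which the paper passes over silently --- indeed holds, since a discrepancy forces both a jump during the climb of length $\mathrm{d}u$ (probability $O(\mathrm{d}u)$) and a later jump landing in a spatial band of width $\mathrm{d}u$ (probability $O(\mathrm{d}u)$).
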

\medskip

\begin{remark}[More general initial positions]
\label{rem-gen-init-cond}
\rm In the above theorem, one may consider a more general initial position of the reflected processes
than zero. For example, to get \eqref{reflsupremum} for $\overline{X}_0=z\geq x =X_0$, one can instead consider
	\begin{eqnarray*}\lefteqn{
	\E_x[\mathrm{e}^{-w \tau_c(x)} \mid X_0=x, \overline{X}_0=z]}\\&&
	=\E_x[\mathrm{e}^{-w \tau_{\downarrow z-c}(x)}\mathbb{1}_{\{\tau_{\downarrow z-c}(x)<\tau_{\uparrow z}(x)\}}]
	+\E_x[\mathrm{e}^{-w\tau_{\uparrow z}(x)}\mathbb{1}_{\{\tau_{\uparrow z}(x)<\tau_{\downarrow z-c}(x)\}}]
    	\E_z[\mathrm{e}^{-w \tau_c(z)} \mid X_0=z, \overline{X}_0=z]
	\end{eqnarray*}
and apply Theorems \ref{mainthm3} and \ref{mainthm4}.
\end{remark}

\begin{remark}[Alternative approach to Theorem \ref{mainthm6}]
\label{rem-alt-approach-thm6}
\rm
The first passage time for the reflected process at running maximum considered in Theorem \ref{mainthm6} could be analyzed using \cite[Thm. 3.1 and Ex. 3.5]{Davidetal} in terms of solution of some integral equation. We decided to do it in a more explicit way.
\end{remark}

%
%
\begin{proof}[Proof of Theorem \ref{mainthm6}]
To prove \eqref{reflsupremum}, we adapt the argument of \cite{Lehoczky} executed for diffusion process. In the first step, we will find the law of $\overline{X}_{\tau_c(x)}$. To find $\P_x(\overline{X}_{\tau_c(x)}>w)$ we partition the interval $[x,w]$ into $n$ subintervals $[s_{i}^n, s_{i+1}^n]$
($i=0,1,\ldots, n-1$)
such that $m_n=\max_i (s_{i+1}^n-s_i^n)\to 0$ as $n\rightarrow +\infty$.
We approximate
$\P_x(\overline{X}_{\tau_c(x)}>w)$ by
$\P(A_n)$ for
	\[
	A_n=\bigcap_{i=0}^{n-1} \{X \text{~hits~} s_{i+1}^n \text{~before~}X\text{~jumps below~}s^n_{i}-c\}.
	\]
To do this, we have to prove that this approximation does not depend on the chosen partition, for which we use the fact that the process $X$ crosses new levels upward in a continuous way, so that $\P_x(\overline{X}_{\tau_c(x)}>w)=\lim_{n\to+\infty}\P(A_n)$.

Then, by the Markov property and Theorem \ref{mainthm3},
	\begin{eqnarray}
	\P_x(\overline{X}_{\tau_c(x)}>w)=\lim_{n\to+\infty}\P(A_n)&=&\exp\left\{\lim_{n\to+\infty} \sum_{i=0}^{n-1} \log
	L_{\uparrow}(0;s^n_i,s^n_{i+1},s^n_i-c)\right\}\nonumber\\&=&
	\exp\left\{-\lim_{n\to+\infty} \sum_{i=0}^{n-1} (s^n_{i+1}-s^n_{i})\frac{1}{s^n_{i+1}-s^n_{i}}\log\left(1- \frac{L_{\uparrow}(0;s^n_i,s^n_{i+1},s^n_i-c)-1}{L_{\uparrow}(0;s^n_i,s^n_{i+1},s^n_i-c)}\right)\right\}
	\nonumber\\
	&=&\exp\left\{-\int_x^w \frac{\partial L_{\uparrow}(0;z,z,z-c)}{L_{\uparrow}(0;z,z,z-c)}\;\mathrm{d}z
	\right\},
	\label{passing}\end{eqnarray}
where we have used the fact that $\partial L_{\uparrow}(0;z,z,z-c)$ is a continuous function of $z$ except possible in countable many points, so that the above Riemann integral is well defined.
As a result,
	\eqn{
	\label{dens-overline-X-tau-c}
	\P(\overline{X}_{\tau_c(x)}\in dw)=\exp\left\{-\int_x^w \frac{\partial L_{\uparrow}(0;z,z,z-c)}{L_{\uparrow}(0;z,z,z-c)}\;\mathrm{d}z\right\} \frac{\partial L_{\uparrow}(0;w,w,w-c)}{L_{\uparrow}(0;w,w,w-c)}dw.
	}

Define the sequence of stopping times
	\[
	\varrho^n_k=\inf\{t\geq 0\colon X_{\Xi^n_{k-1}+t}-X_{\Xi^n_{k-1}}=s^n_k-s^n_{k-1} \quad \text{or} \quad -c\},
	\]
for $\Xi_k^n=\sum_{i=1}^k\varrho^n_i$.
Then
	\[
	\E_x \left[ \mathrm{e}^{-w \tau_c(x)} \mid \overline{X}_{\tau_c(x)}=w\right]
	=\lim_{n\to+\infty}\prod_{i=1}^n\E_x\left[ \mathrm{e}^{-w \varrho^n_i} \mid X_{\Xi^n_{i-1}}=s^n_{i-1}, X_{\Xi^n_{i}}=s^n_{i}\right]
	\E_x \left[ \mathrm{e}^{-w \varrho^n_{n+1}} \mid X_{\Xi^n_{n}}=w, X_{\Xi^n_{n+1}}\leq w-c \right],
	\]
with $s_{n+1}^n>w$ such that $s^n_{n+1}-w$ tends to $0$ as $n\rightarrow+\infty$.
By Theorems \ref{mainthm3} and \ref{mainthm4},
	\[
	\E_x\left[ \mathrm{e}^{-w \varrho^n_i}\mid X_{\Xi^n_{i-1}}=s^n_{i-1}, X_{\Xi^n_{i}}=s^n_{i}\right]=\frac{L_{\uparrow}(w;s^n_{i-1},s^n_{i},s^n_{i-1}-c)}{L_{\uparrow}(0;s^n_{i-1},s^n_{i},s^n_{i-1}-c)},
	\]
and
	\[
	\E_x \left[ \mathrm{e}^{-w \varrho^n_{n+1}} \mid X_{\Xi^n_{n}}=w, X_{\Xi^n_{n+1}}\leq w-c \right]
	=\frac{L_{\downarrow}(w;w,s^n_{n+1},w-c)}{L_{\downarrow}(0;w,s^n_{n+1},w-c)}
	=\frac{L_{\downarrow}(w;w,s^n_{n+1},w-c)/(s_{n+1}^n-w)}{
	L_{\downarrow}(0;w,s^n_{n+1},w-c)/(s_{n+1}^n-w)}.
	\]
Hence applying the same limiting arguments as in \eqref{passing}, we derive
	\begin{align}
	\label{ingred-2-mail6}
	\E_x \left[ \mathrm{e}^{-w \tau_c(x)} \mid \overline{X}_{\tau_c(x)}=w\right]
	&=\exp\left\{-\int_x^w \frac{\partial L_{\uparrow}(w;z,z,z-c)}{L_{\uparrow}(w;z,z,z-c)}\;dz\right\}
	\exp\left\{\int_x^w \frac{\partial L_{\uparrow}(0;z,z,z-c}{L_{\uparrow}(0;z,z,z-c)}\;dz\right\}\\
	&\qquad\times \frac{(\partial L_{\downarrow}(w;w,w,w-c)}{\partial L_{\downarrow}(0;w,w,w-c)}.\nn
	\end{align}
Using the fact that $\E_x \left[ \mathrm{e}^{-w \tau_c(x)}\right]=\int_x^{+\infty} \E_x \left[ \mathrm{e}^{-w \tau_c(x)}|\overline{X}_{\tau_a(x)}=w\right]\P(\overline{X}_{\tau_c(x)}\in dw)$, together with \eqref{dens-overline-X-tau-c} and \eqref{ingred-2-mail6}, completes the proof.
\end{proof}
\medskip

To analyze reflection at the running infimum we will use martingale theory for the first time. We first set the stage. Let $a(w,c,u)$ be a solution of the equation
	\begin{equation}
	\label{acb}
	Z_\downarrow(w;u+c,u)+L_{\uparrow}(w;u+c,a(w,c,u),u)=1.
	\end{equation}
We first argue that the solution to \eqref{acb} always exists.  Note that above equation is equivalent to
	\[
	\E_{u+c} [\mathrm{e}^{-w \tau_{\downarrow u}}]+\E_{u+c} [\mathrm{e}^{-w \tau_{\uparrow a(w,c,u)}(x)}\I_{\{\tau_{\uparrow a(w,c,u)} (x) < \tau_{\downarrow u}(x)\}}]
	=1.
	\]
By Theorem \ref{mainthm3}, $a\mapsto L_{\uparrow}(w;u+c,a,u)$ is continuous.
Moreover,
	\[
	L_{\uparrow}(w;u+c,a,u)=\E_{u+c} [\mathrm{e}^{-w \tau_{\uparrow a}(x)}\I_{\{\tau_{\uparrow a} (x) < \tau_{\downarrow u}(x)\}}]
	\]
tends to $1$ as $a\downarrow u+c$, and to $0$ as $a\uparrow+\infty$.  Since $Z_\downarrow(w;u+c,u)<1$, the solution of \eqref{acb} indeed always exists.
%
\begin{thm}[First passage time for the reflected process at the running infimum]
\label{mainthm7}
The exit time $\widehat\tau_c(x)$ of the process $\widehat{Y}$ that is the reflected version of $X$ reflected at the running supremum satisfies
\begin{itemize}
\item[(i)] $\widehat{\tau}_c(x)=\tau_{\uparrow c}(x)$ when $\underline{X}_0=0$;
\item[(ii)] for $X_0 =x>0$ and $0<X_0 =u\leq x$, instead, the Laplace-Stieltjes transform of $\widehat\tau_c(x)$ equals
	\begin{align}
	\E_x[\mathrm{e}^{-w\widehat{\tau}_c(x)}]&=Z_{\downarrow}(w;x,u)+L_{\uparrow}(w;x,a(w,c,u),u),
	\label{reflinfimum}
	\end{align}
where $a(w,c,u)$ solves \eqref{acb}.
\end{itemize}
\end{thm}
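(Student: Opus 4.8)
The plan is to dispatch part (i) by a direct pathwise observation and to prove part (ii) by a Kennedy-type martingale combined with optional stopping, with the defining relation \eqref{acb} supplying the boundary value at the exit. For part (i), when $\underline{X}_0=0$ the process $X$ stays strictly positive for all $t$ (it starts positive, drifts up, and the multiplicative jumps by $p\in(0,1)$ preserve positivity), so the running infimum never drops below $0$ and $\underline{X}_t\equiv 0$. Hence $\widehat{Y}_t=X_t-\underline{X}_t=X_t$ and $\widehat{\tau}_c(x)=\inf\{t\ge 0\colon X_t>c\}=\tau_{\uparrow c}(x)$, which is exactly (i).

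For part (ii), I would work with the Markov pair $(X_t,\underline{X}_t)$ and consider the candidate
\[
M_t=\mathrm{e}^{-wt}\,\Phi(X_t,\underline{X}_t),\qquad \Phi(y,v):=Z_\downarrow(w;y,v)+L_{\uparrow}(w;y,a(w,c,v),v),
\]
so that $M_0=\Phi(x,u)$ is precisely the right-hand side of \eqref{reflinfimum}. As long as the infimum is not updated, $t\mapsto \mathrm{e}^{-wt}\Phi(X_t,v)$ should be a (local) martingale: the first-step relations underlying Theorems \ref{mainthm2} and \ref{mainthm3} show that both $h=Z_\downarrow(w;\cdot,v)$ (extended by $1$ below $v$) and $h=L_{\uparrow}(w;\cdot,a,v)$ (extended by $0$ below $v$) satisfy $\partial_y h(y)+\lambda h(py)-(\lambda+w)h(y)=0$ for $y>v$; adding these gives $(\mathcal{A}_X-w)\Phi(\cdot,v)=0$, where $\mathcal{A}_X$ is the generator of $X$. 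Thus the drift between jumps, together with the ordinary downward jumps that do \emph{not} create a new infimum, are already compensated.

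The decisive step is the behaviour of $M$ at the jump epochs where $X$ falls below the current infimum $v$, landing at $py<v$ and resetting the state to $(py,py)$ with zero drawup. The heart of the argument is to show that the level $a(w,c,v)$ selected by \eqref{acb} makes $\Phi$ glue correctly along the diagonal $\{y=v\}$, so that these resetting jumps are compensated as well and $M$ is a genuine martingale up to $\widehat{\tau}_c$. Granting this, optional stopping at $\widehat{\tau}_c$ finishes the proof: at that time the drawup equals $c$, i.e.\ $X_{\widehat{\tau}_c}=\underline{X}_{\widehat{\tau}_c}+c$, whence $\Phi(\underline{X}_{\widehat{\tau}_c}+c,\underline{X}_{\widehat{\tau}_c})=1$ by the very definition \eqref{acb}, and therefore $M_{\widehat{\tau}_c}=\mathrm{e}^{-w\widehat{\tau}_c}$. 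Taking expectations then yields $\Phi(x,u)=M_0=\E_x[M_{\widehat{\tau}_c}]=\E_x[\mathrm{e}^{-w\widehat{\tau}_c(x)}]$, i.e.\ \eqref{reflinfimum}. The optional stopping is legitimate because $|M_t|\le |Z_\downarrow|+|L_\uparrow|\le 2$ for $\mathrm{Re}[w]>0$, and $\widehat{\tau}_c<\infty$ a.s.\ by the excursion/positive-recurrence bound in the spirit of Remark \ref{rem:two_sided_down}.

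I expect the main obstacle to be exactly the compensation of the infimum-resetting jumps: establishing the diagonal matching of $\Phi$ forced by \eqref{acb} is where the real content sits, since the between-jump and non-resetting-jump parts reduce to the first-step identities already available for $Z_\downarrow$ and $L_{\uparrow}$. A useful way to isolate and control this difficulty is to run the fixed-infimum martingale $\mathrm{e}^{-wt}\Phi(X_t,u)$ up to $\tau_{\downarrow u}\wedge\widehat{\tau}_c$, apply the strong Markov property to restart at the freshly created infimum, and track how the contribution of successive excursions above the decreasing infimum assembles into $\Phi(x,u)$; the role of \eqref{acb} as the exact value of $\Phi$ at drawup level $c$ is what must be shown to make this bookkeeping collapse to the claimed closed form.
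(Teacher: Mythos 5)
Your overall strategy coincides with the paper's: part (i) by the pathwise observation that the infimum never moves, and part (ii) by making $\Phi(y,v)=Z_\downarrow(w;y,v)+L_{\uparrow}(w;y,a(w,c,v),v)$ harmonic along the pair $(X_t,\underline{X}_t)$, applying optional stopping at $\widehat{\tau}_c(x)$, and using \eqref{acb} to evaluate the terminal value as $1$ (this is exactly the paper's Kella--Yor/Dynkin argument, and your boundedness remark matches the paper's justification of optional stopping). The genuine gap is the step that you yourself flag and then assume (``granting this''): the compensation of the jumps at which a new infimum is created. You assert that the matching of $\Phi$ along the diagonal $\{y=v\}$ is ``forced by \eqref{acb}''. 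It is not, and the step, as you envision it, would fail if you tried to carry it out.

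Here is the concrete obstruction. Since $a(w,c,v)>v+c$, the strong Markov factorization (the paper's \eqref{eq:upward5}) gives $L_{\uparrow}(w;y,a(w,c,v),v)=L_{\uparrow}(w;y,v+c,v)\,L_{\uparrow}(w;v+c,a(w,c,v),v)$ for $v<y<v+c$, while Theorem \ref{mainthm4} gives $Z_\downarrow(w;y,v)=L_{\downarrow}(w;y,v+c,v)+L_{\uparrow}(w;y,v+c,v)\,Z_\downarrow(w;v+c,v)$. Adding these and invoking \eqref{acb} yields
\begin{equation*}
\Phi(y,v)=L_{\downarrow}(w;y,v+c,v)+L_{\uparrow}(w;y,v+c,v)
=\E_y\big[\mathrm{e}^{-w(\tau_{\uparrow v+c}(y)\wedge\tau_{\downarrow v}(y))}\big],
\qquad v<y<v+c,
\end{equation*}
so that $\lim_{y\downarrow v}\Phi(y,v)<1$ for $\mathrm{Re}[w]>0$ (the two-sided exit time from a fresh infimum is strictly positive). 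Thus \eqref{acb} pins $\Phi(\cdot,v)$ to equal $1$ at drawup level $c$ (first argument $v+c$), which is \emph{incompatible} with $\Phi$ gluing to $1$ at drawup level $0+$ (first argument $v+$): no single choice of $a$ achieves both, so the resetting jumps cannot be compensated by the mechanism you propose. The paper handles this step by a different device: it extends the scale functions by the conventions $Z_\downarrow(w;y,v)=1$ and $L_{\uparrow}(w;y,a,v)=0$ for $y\leq v$, so that in the Kella--Yor formula the jump sum $\sum_{s\leq t}\bigl(F(w;X_s,a(w,c,\underline{X}_s),\underline{X}_s)-F(w;X_s,a(w,c,\underline{X}_{s-}),\underline{X}_{s-})\bigr)$ vanishes identically, for \emph{any} choice of $a$; equation \eqref{acb} enters only at the terminal time, where the drawup equals $c$. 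You should note, however, that your instinct has located precisely the delicate point of that argument: under the paper's convention $\Phi(\cdot,v)\equiv 1$ at and below the diagonal, the displayed identity shows that $\Phi(\cdot,v)$ has right-limit strictly less than $1$ at the diagonal, i.e.\ the martingale candidate is discontinuous exactly at the post-reset states, and this discontinuity is what any complete argument (the paper's included) must confront. In short: your proposal is incomplete at its central step, that step cannot be closed the way you suggest, and the excursion/renewal bookkeeping you sketch at the end leads instead to an integral equation over the random post-reset landing positions, not directly to the closed form \eqref{reflinfimum}.
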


\begin{proof} We will follow the main idea of \cite{Martijnrefl}, although the proof requires substantial changes
compared to the case of L\'evy processes, due to lack of space-homogeneity of our process $X$.
Fix $0<b<x<a$. Recall that $\tau_{a,b}(x)=\min\{\tau_{\uparrow a}(x),\tau_{\downarrow b}(x)\}$ by \eqref{tau-ab-def}. By Theorem \ref{mainthm3},
	\[
	\mathbb{1}_{\{\tau_{\uparrow a}(x)<\tau_{\downarrow b}(x)\}}=L_{\uparrow}(w;X_{\tau_{a,b}(x)}, a,b),
	\]
where we take $L_{\uparrow}(w;x, a,b)=0$ for $x<b$. By the Markov property, we conclude that
	\[
	t\mapsto \E_x[\mathrm{e}^{-w \tau_{a,b}(x)} L_{\uparrow}(w;X_{\tau_{a,b}(x)}, a,b) \mid \mathcal{F}_t]=\mathrm{e}^{-w \tau_{a,b}(x)\wedge t} L_{\uparrow}(w;X_{\tau_{a,b}(x)\wedge t}, a,b)
	\]
is a local martingale.

Similarly, from Theorem \ref{mainthm4}, by putting $Z_\downarrow(w;x,b)=1$ for $x<b$,
we conclude that
	\[
	\mathbb{1}_{\{\tau_{\downarrow b}(x)<\tau_{\uparrow a}(x)\}}=
	Z_\downarrow(w;X_{\tau_{a,b}(x)},b)-L_{\uparrow}(w;X_{\tau_{a,b}(x)},a,b)Z_\downarrow(w;a,b),
	\]
and hence
	\[
	t\mapsto \mathrm{e}^{-w \tau_{a,b}(x)\wedge t}\left(Z_\downarrow(w;X_{\tau_{a,b}(x)\wedge t},b)-L_{\uparrow}(w;X_{\tau_{a,b}(x)\wedge t},a,b)Z_\downarrow(w;a,b)\right)
	\]
is a local martingale as well.

By taking a linear combination, we observe that
	\begin{equation}
	\label{Zmartingale}
	t\mapsto\mathrm{e}^{-w \tau_{a,b}(x)\wedge t}Z_\downarrow(w;X_{\tau_{a,b}(x)\wedge t},b)
	\end{equation}
is also a local martingale. Denoting
\begin{equation}\label{defF}F(w; x, a,b)=Z_\downarrow(w;x,b)+L_{\uparrow}(w;x, a,b),
\end{equation}
this finally means that
	\[
	t\mapsto \mathrm{e}^{-w \tau_{a,b}(x)\wedge t}F(w;X_{\tau_{a,b}(x)\wedge t},b)
	\]
is a local martingale. This is the starting point of our analysis.
\medskip

Since $0<b<a$ are general, we can conclude that
the function $y\rightarrow F(w;y,a,b)$ is in the domain of the extended generator
$\mathcal{A}^\dagger$ of the process $X$ when it is exponentially killed with intensity $w$ (denoted here by $X^\dagger_t$), that is,
the function $F(w;y,a,b)$ is in the set of functions $f$ for which there exists function $\mathcal{A}^\dagger f$ such that the process
$f(X_t^\dagger) -\int_0^t \mathcal{A}^\dagger f(X_s^\dagger)\mathrm{d}s$ is a local martingale.
More precisely, by Dynkin formula
$$\mathrm{e}^{-w \tau_{a,b}(x)\wedge t}F(w;X_{\tau_{a,b}(x)\wedge t},b)
=\mathrm{e}^{-w \tau_{a,b}(x)\wedge t}F(w;X_{\tau_{a,b}(x)\wedge t},b)-\int_0^{\tau_{a,b}(x)\wedge t}
\mathcal{A}^\dagger F(w;X_{s},b) \mathrm{d}s.
$$
That is, for any $0<b<a$,
	\begin{equation}
	\label{martzero}
	\mathcal{A}^\dagger F(w;y,a,b)=
0,\qquad b<y<a.
	\end{equation}
This completes the first step of our proof.

\medskip

In the second step of the proof we use the following version of It\^o's formula (see also Kella and Yor \cite{OfferMarc} adapted to our set-up).  
For a c\`adl\`ag adapted process $t\mapsto V_t$, which is of finite variation, and a function $(y,z)\mapsto f(y, z)$ that is continuous in $y$, that lies in the domain of $\mathcal{A}^\dagger$, and that is continuously differentiable with respect of $z$, we then obtain that
	\eqn{
	\label{loc-mart-dagger}
	t\mapsto f(X_t^\dagger, V_t)-
	\int_0^t
	\mathcal{A}^\dagger f(X_s^\dagger, V_s)\mathrm{d}s
	-
	\int_0^t \frac{\partial}{\partial z} f(X_s^\dagger, z)_{\big|z=V_s} \mathrm{d}V_s^c-
	\sum_{s\leq t}(f(X_s^\dagger, V_s) - f(X_s^\dagger, V_{s-}))
	}
is a local martingale.

Without loss of generality we can assume that $p^k a(w,c,z)\neq z$ for any $k\geq 1$.
Indeed, it is enough to choose appropriate $c$ and then approximate $\widehat{\tau}_c(x)$
for general $c>0$ by the monotonic limit of $\widehat{\tau}_{c_n}(x)$
with respect to $c_n$ satisfying the above condition.
Then we can use \eqref{loc-mart-dagger} with $V_t=\underline{X}_t$ and $f(y,z)=F(w;y,a(w,c,z),z)$
as this function is continuously differentiable with respect of $z$ which follows from the definition of the function $F$ given in
\eqref{defF}, and the formulas \eqref{zdownarrowformula} and \eqref{eq:upward3}-\eqref{eq:upward4}.
Note that
	\[
	\sum_{s\leq t}\mathrm{e}^{-ws}\left(F(w;X_s, a (w,c,\underline{X}_{s}), \underline{X}_{s})- F(w;X_s, a(w,c,\underline{X}_{s-}), \underline{X}_{s-})\right)
	=0.\]
Indeed,
either $X_s>\underline{X}_{s}$ or $X_s=\underline{X}_{s}$ (that is, we crossed the previous infimum at time $s$).
In the first case $\underline{X}_{s}=\underline{X}_{s-}$ and hence
$F(w;X_s, a (w,c,\underline{X}_{s}), \underline{X}_{s})=F(w;X_s, a(w,c,\underline{X}_{s-}), \underline{X}_{s-})$.
Otherwise, $X_s=\underline{X}_{s}\leq \underline{X}_{s-}$ and
$F(w;X_s, a (w,c,\underline{X}_{s}), \underline{X}_{s})=F(w;X_s, a(w,c,\underline{X}_{s-}), \underline{X}_{s-})=1$
because by \eqref{defF} we have $F(w;x, a, z)=1$ for all $x\leq z$.
This follows from the observation that in this case $Z_\downarrow(w;x,z)=1$
by \eqref{Zdownarrowdef1} and
$L_{\uparrow}(w;x, a(w,c,z),z)=0$ by \eqref{eq:upward1}.
Moreover, in our model $\underline{X}_t^c=0$, because of the upward drift and downward jumps of the process $X_t$
we can cross past infimum only by a jump.
By \eqref{martzero} this gives that
	\[
	t\mapsto \mathrm{e}^{-wt} F(w;X_{t
	},a(w,c,\underline{X}_{t}),\underline{X}_{t})
	\]
is a local martingale. Note that up to time $\widehat{\tau}_c(x)$, the processes $X_t$ and $\underline{X}_t$ are bounded by $u+c$.
By the Optional Stopping Theorem, we then obtain that
	\eqn{
	\E_x \left[\mathrm{e}^{-w \widehat{\tau}_c(x)}F(w;X_{\widehat{\tau}_c(x)},a(w,c, \underline{X}_{\widehat{\tau}_c(x)}),\underline{X}_{\widehat{\tau}_c(x)})\right]=F(w;x,a(w,x,u),u).\label{almostfiniteid}
	}

We next argue that $F(w;X_{\widehat{\tau}_c(x)},a(w,c, \underline{X}_{\widehat{\tau}_c(x)}),\underline{X}_{\widehat{\tau}_c(x)})=1$ almost surely.
Observe that $X_{\widehat{\tau}_c(x)}-\underline{X}_{\widehat{\tau}_c(x)}=\widehat{Y}_{\widehat{\tau}_c(x)}=c$,
and hence by the definition of $a(w,x,u)$ in \eqref{acb}, we obtain
	\eqn{
	F(w;X_{\widehat{\tau}_c(x)},a(w, c, \underline{X}_{\widehat{\tau}_c(x)}),\underline{X}_{\widehat{\tau}_c(x)})
	=F(w;\underline{X}_{\widehat{\tau}_c(x)}+c,a(w, c, \underline{X}_{\widehat{\tau}_c(x)}),\underline{X}_{\widehat{\tau}_c(x)})
	=1,
	}
almost surely, as required.
This completes the proof by \eqref{defF} and \eqref{almostfiniteid}.
\end{proof}

\section{Discussion and further research}
\label{sec-disc}
In this section, we discuss alternative approaches as well as possible future research.

\paragraph{An alternative approach to the two-sided exit problems in Theorems \ref{mainthm1}--\ref{mainthm4}.}
Two-sided exit problems related to the exit times in \eqref{exittimes_1} and \eqref{exittimes_2}, as studied in Theorems \ref{mainthm1}--\ref{mainthm4}, can also be derived by solving exit problems for an appropriately scaled sequence of queueing models. Consider for example an immigration-and-catastrophe model, in which immigrations occur according to a Poisson process at rate $\beta_m = m\beta$, and catastrophes are governed by the so-called binomial catastrophes mechanism: at the epochs of a catastrophic event that occurs according to an independent Poisson process at rate $\lambda_m=m\lambda$, every member of the population survives with fixed probability 
independently of anything else.
%
%
Denoting
the population size at time $t$ by $Q^{(m)}_t$, one can prove that the exit times of the fluid scaled limit
$	
\lim_{m\to\infty} Q^{(m)}_t/m
$	
converges weakly to the exit times of the process $X$. We decided to solve our exit problems in a {\em direct} way, thus avoiding additional arguments related to weak convergence.

\paragraph{An alternative approach to our results, with a focus on Theorem \ref{mainthm2}.} We note that a similar approach may be used to obtain the results when, instead of focusing on the first time the process jumps downwards, one focuses on an infinitesimally small time interval right after $0$. We only provide a brief sketch of the derivation and only consider one-sided downward exit time as in Theorem \ref{mainthm2}.

We condition on the number of jumps in $(0,\varepsilon)$. Two jumps will happen with a probability that is $o(\varepsilon)$, one jump with a probability $\lambda \varepsilon + o(\varepsilon)$ and no jumps will happen with probability $1 - \lambda \varepsilon + o(\varepsilon)$.

Denote $\phim = \P(\tau_{\downarrow b}(x) > t)$. Consider first $x \in [b,b/p)$, in which case a jump in $(0,\varepsilon)$ takes the value of the process below $b$ immediately, and therefore
	$$
	\phim = (1-\lambda \varepsilon + o(\varepsilon)) \P(\tau_b^-(x+\varepsilon) > t-\varepsilon) + o(\varepsilon).
	$$
We now divide both side by $\varepsilon$ and let $\varepsilon \nearrow 0$ to obtain
	$$
	\lim_{\varepsilon \to 0}\frac{\phim - \varphi(x+\varepsilon, t-\varepsilon)}{\varepsilon} = - \lambda \phim.
	$$
The numerator on the above LHS may be written as
	$$
	\phim -\varphi(x, t-\varepsilon) + \varphi(x, t-\varepsilon) - \varphi(x+\varepsilon, t-\varepsilon),
	$$
and we therefore obtain
	\begin{equation}
	\label{eq:diff_eq_1}
	\phimx-\phimt = \lambda \phim,
	\end{equation}
which is valid for all $x \in (b,b/p)$. Consider now $x \ge b/p$. For these values,
	$$
	\phim = \lambda \varepsilon \P(\tau_b^-(xp) > t-\varepsilon) +  (1-\lambda \varepsilon + o(\varepsilon)) \P(\tau_b^-(x+\varepsilon > t-\varepsilon) + o(\varepsilon).
	$$
Similar arguments will imply
	\begin{equation}
	\label{eq:diff_eq_2}
	\phimx-\phimt = \lambda \phim - \lambda \varphi(xp,t),
	\end{equation}
which is valid for all $x \ge b/p$. One can then check that the differential equations in \eqref{eq:diff_eq_1} and \eqref{eq:diff_eq_2} are equivalent to the integral equation implied, in a straightforward manner, by \eqref{eq:tau_down}. We refrain from discussing such approaches further.

\paragraph{Applications of our results and future directions. }The exit problems studied in this paper might be used also in applications. An obvious choice is to look at all problems where fluctuation theory has been applied for the L\'evy processes. This is of course a long-term project and we are confident that our results will contribute to its development. Another possible application might lie in the development of {\em asymptotic results}. Indeed, the formulas that we provide for the Laplace transforms of exit times are closely related to the tail behavior of these exit times, through inversion or Tauberian theorems. We refrain from such an analysis, as it requires various different techniques, and thus would make the paper less coherent.

\asymptotics{\section{Asymptotic results}\label{sec-as}

\begin{cor}[A Cram\'er-type estimate]\label{Crameras}
\color{red}
	\begin{equation}\label{Cras}
	\lim_{x\uparrow+\infty}\frac{\log \mathbb{P}_x(\tau_{\downarrow b}(x)\leq t)}
	{\log_{p^{-1}} (x/b) \log \log_{p^{-1}} (x/b)}
	=-1
	\end{equation}
\end{cor}

\todo[inline, color=red]{To be removed unless we prove this using the LST expression}}

\paragraph{Acknowledgments.}
\noindent The work of SK and RvdH is supported by the NWO Gravitation Networks grant 024.002.003. The work of ZP is partially supported by the National Science Centre under the grant  2018/29/B/ST1/00756 (2019-2022).




\end{document}